\newcommand{\up}{\upharpoonright}
\newcommand{\w}[1]{\widetilde{#1}}
\newcommand{\ze}{\mathbf{0}} 
\newcommand{\un}{\mathbf{1}} 
\newcommand{\mc}[1]{\mathcal{#1}}
\newcommand{\mf}[1]{\mathfrak{#1}}
\newcommand{\RM}{\mathcal{R}} 
\newcommand{\p}{\mathcal{P}}
\newcommand{\N}{\mathbb{N}}
\newcommand{\K}{\mathbb{K}}
\newcommand{\R}{\mathbb {R}}
\newcommand{\C}{\mathbb {C}}
\newcommand{\A}{\mathcal{A}}
\newcommand{\ov}{\overline}
\newcommand{\ra}{\right\rangle}
\newcommand{\lr}[2]{\left\langle #1,#2\ra}
\newtheorem{theorem}{Theorem}
\newtheorem{lemma}[theorem]{Lemma}
\newtheorem
{corollary}
[theorem]
{Corollary}
\newtheorem{definition}
[theorem]
{Definition}
\newtheorem{notations}
[theorem]
{Notations}
\newtheorem{remark}
[theorem]
{Remark}
\numberwithin{subsection}{section}
\numberwithin{theorem}{section}
\numberwithin{equation}{section}
\begin{document}

\title
[Differentials in Banach Algebras]
{Frechet
Differential
of a 
Power Series
in 
Banach Algebras
}

\author{Benedetto Silvestri}
\address{
Dipartimento di Matematica Pura
ed Applicata\\
via Trieste, 63\\
35121 Padova, Italy
}
\curraddr{Mathematics Department,
University of Benghazi, 
P.O.Box $9480$,
Benghazi,
Libya,
{\tt abdwhdsilv@benghazi.edu.ly}
}

\email{slvstrbndt@member.ams.org}
\date{\today}
\thanks{
The Author would like to thank
Prof. Victor Burenkov
and 
Prof.
Massimo Lanza de Cristoforis
for their helpful comments 
in reading 
the manuscript.
He also acknowledges
the anonymous referee
for his comments 
which
helped to
improve the presentation
of the work.
Reasearch supported by 
the
Engineering and Physical 
Sciences Research Council
(EPSRC)}
\keywords{
Fr$\Acute{e}$chet 
differentiation
in Banach algebras,
Functional Calculus.
}
\subjclass[2010]{58C20, 46H, 47A60}
\begin{abstract}
We 
present
two
new 
forms
in which
the Frechet differential
of a power series
in a unitary
Banach algebra
can be
expressed
in terms of 
absolutely convergent
series 
involving
the commutant
$C(T):A\mapsto [A,T]$.
Then we apply
the results
to study series of vector-valued
functions on domains in Banach spaces
and to the 
analytic functional
calculus in a 
complex Banach space.
\end{abstract}
\maketitle
\tableofcontents
\section{Introduction}
In this work
we consider a general 
formula for
the
Fr$\Acute{e}$chet 
differential 
of a power series in 
a unitary Banach algebra
$\A$
over
$\K\in\{\R,\C\}$.
In
Theorem
\ref{05051540c}
it is proved 
that
the Fr$\Acute{e}$chet differential
of the
map
$
g:
\A\ni T\mapsto 
\sum_{n=0}^{\infty}
\alpha_{n}
T^{n}
\in\A
$,
can be expressed
in terms of 
absolutely convergent
series 
involving
the commutant
$
C(T):
\A\ni h
\mapsto
hT-Th
\in\A
$
with $T\in\A$,
in three different forms
containing
$C(T)$,
$C(T^{n})$
or
$C(T)^{n}$.
\par
The two forms 
of the 
Fr$\Acute{e}$chet 
differential 
of a power series
containing
$C(T)$
and
$C(T^{n})$
given in 
statements $(1)$
and $(2)$ of
Theorem
\ref{05051540c}
are new.
While 
we
give a different proof
respect
to 
\cite{Rudin1},
of the  
known 
formula
in statement $(3)$,
containing
the form
$C(T)^{n}$.
\par
These results
are then applied
to study series of vector-valued
functions on domains in Banach spaces
and also applied to analytic functional
calculus in complex Banach spaces.
\par
The
commutant 
$C(T)-$forms in the differential of 
a power series $g$ in a 
noncommutative Banach algebra $\A$,
allows 
to strongly 
simplify the formula
of the derivative
of a function of the type
$\R\supseteq D\ni t\mapsto 
g(\mc{T}(t))$,
whenever
$C(\mc{T}(t))^{n}
\left(
\frac{d\,\mc{T}}{dt}(t)
\right)=0$
for some $n\in\N-\{0\}$,
(see \eqref{16210903} and \eqref{05180801}).
Here 
$\mc{T}$ is a derivable map defined on an open
subset $D$ of $\R$ and with values in $\A$,
and
$D\ni t\mapsto \frac{d\,\mc{T}}{dt}(t)\in\A$
is 
the derivative of $\mc{T}$.
In a similar way we obtain
simplification also for 
the more general case of differential maps
(see Remark \ref{17081112}). 
To obtain
this type of simplification procedures
in calculating the derivative 
or the differential maps
of functions
valued in a noncommutative Banach algebra, 
represents one of the main motivations of this work.
\par
Let us start fixing some notations.
Let
$\mc{A}$
be
a unitary
Banach algebra
over $\K$
and
denote by
$B(\A)$
the unitary Banach algebra
of all bounded linear 
operators on $\mc{A}$
with the standard $\sup-$norm.
Define
the 
linear
maps
$\RM(T):
\A\ni h\mapsto T h\in\A$
and
$\mc{L}(T):
\A\ni h\mapsto h T\in\A$
for all $T\in\A$,
then it results that
$\RM,\mc{L}\in B(\A,B(\A))$
such that
$\|\RM\|_{B(\A,B(\A))}\leq 1$
and
$\|\mc{L}\|_{B(\A,B(\A))}\leq 1$.
Let
$
g(\lambda)
\doteqdot
\sum_{n=0}^{\infty}
\alpha_{n}
\lambda^{n}
$,
with $\lambda\in\K$,
the
coefficients 
$\alpha_{n}$
in
$\K$
and
$R>0$
its
radius of convergence.
In order to simplify the notations,
we convein to denote 
by the same symbol $g$, both
the functions: 
the numerical map
$g(\lambda)=\sum_{n=0}^{\infty}\alpha_{n}\lambda^{n}\in\K$,
with $\lambda\in\K$ such that
$|\lambda|<R$,
and the $\mc{A}-$valued map
$g(T)=\sum_{n=0}^{\infty}\alpha_{n}T^{n}\in\mc{A}$,
with $T\in\mc{A}$ such that
$\|T\|_{\mc{A}}<R$,
where
$\|\cdot\|_{\mc{A}}$ is the norm on $\mc{A}$,
(see Def. \ref{03071132}).
\par
Hence
if we denote
by
$g^{(p)}$
the $p-$derivative 
of the numerical map $g$,
we have
$g^{(p)}(\lambda)=
\sum_{n=p}^{\infty}
p!\dbinom{n}{p}
\alpha_{n}\lambda^{n-p}\in\K$,
with $\lambda\in\K$ such that
$|\lambda|<R$,
while by considering
$g^{(p)}$ as 
a 
$B(\A)-$valued map
we obtain
$g^{(p)}(Q)=
\sum_{n=0}^{\infty}
p!\dbinom{n}{p}\alpha_{n}Q^{n-p}\in B(\A)$,
with 
$Q\in B(\A)$ 
such that
$\|Q\|_{B(\A)}<R$.
Thus
we have
for all $T\in\A$ such that
$\|T\|_{\A}<R$
\begin{equation}
\label{18270701}
g^{(p)}(\RM(T))=
\sum_{n=p}^{\infty}
p!\dbinom{n}{p}\alpha_{n}\RM(T)^{n-p}\in B(\A).
\end{equation}
Denote
by
$B_{r}(\ze)$
a
ball of
radius
$r>0$
in 
$\A$
and
let
$g$ be considered as
an $\A-$valued map, so
$g:B_{R}(\ze)\ni T\mapsto\sum_{n=0}^{\infty}\alpha_{n}T^{n}\in\A$,
then 
$g^{[1]}:B_{R}(\ze)\to B(\A)$
denotes 
the 
Fr\'{e}chet
differential map
of $g$.
Therefore
for $T\in B_{R}(\ze)$
the element
$g^{[1]}(T)\in B(\A)$
is
uniquely determined
by the following
$$
\lim_{
\begin{subarray}{l}
h\to\ze
\\ 
h\neq\ze
\end{subarray}
}
\frac{
\|
g(T+h)
-
g(T)
-
g^{[1]}(T)(h)
\|_{\A}
}
{\|h\|_{\A}}
=0.
$$
Finally given
a series
$N=\sum_{n=0}^{\infty}P_{n}$,
where
$P_{n}:\A\to B(\A)$
for all $n\in\N$,
we say that
it converges 
absolutely uniformly
on $B_{r}(\ze)$,
or 
absolutely uniformly
for 
$T\in B_{r}(\ze)$,
if
$$
\sum_{n=0}^{\infty}
\sup_{T\in B_{r}(\ze)}\|P_{n}(T)\|_{B(\A)}
<\infty.
$$
For a more general definition see Def. \ref{02102006}.
\par
It is a
well-known
result
that
a
power series 
$
g(T)
\doteqdot
\sum_{n=0}^{\infty}
\alpha_{n}
T^{n}
$
in a Banach algebra $\A$
is
Fr\'{e}chet differentiable
term by term,
the corresponding
power
series
of
its
Fr\'{e}chet
differential
$g^{[1]}$
is absolutely
uniformly convergent
on
$B_{r}(\ze)$
in the norm topology
of
$B(\A)$
for
all $0<r<R$,
and finally
that
$g^{[1]}$
is continuous,
where
the radius of convergence
$R$
of 
$
\sum_{n=0}^{\infty}
\alpha_{n}
\lambda^{n}
$
is different to zero.
\par
The 
Fr\'{e}chet
differentiability
of $g$
can be seen as a particular
case
of
the Fr\'{e}chet differentiability
of 
a power
series
of
polynomials
between two
Banach spaces
over $\mathbb{K}$,
whose
proof
for $\mathbb{K}=\C$,
was given
for the first time
in
\cite{martin};
while
the one 
for $\mathbb{K}=\R$,
given for the first time
in \cite{Michal}, 
used
a weak form of Markoff's 
inequality for the derivative 
of a polynomial,
see \cite{Schaeffer}.
\par
Our proof
in Lemma \ref{05051540}
of
the
Fr$\Acute{e}$chet 
differentiability 
term by term
of $g$
has the advantage of
giving 
for the particular
case of
Banach algebras
a unified approach
for both the cases
real and complex.
\par
We are now able to 
state 
the 
results 
of the main
\textbf{Theorem
\ref{05051540c}}
of this work.
We
give
for the first
time the Fr\'{e}chet differential 
$g^{[1]}$
of the $\A-$valued
function
$g(T)=
\sum_{n=0}^{\infty}\alpha_{n}T^{n}$,
in 
a
\emph{
$C(T)-$
depending absolutely
uniformly convergent
series
on $B_{r}(\ze)$,
for all $0<r<R$,
in 
\eqref{07071547i}
and
in a
$C(T^{k})-$
depending absolutely
uniformly convergent
series
on $B_{r}(\ze)$,
for all $0<r<R$
and
with $k\geq 1$,
in 
\eqref{07071636i}}.
This 
allows
us
to give immediately 
a simplified 
formula 
for
the
value
$
g^{[1]}(T)(h)
$
in case of
the commutativity 
$[T,h]=\ze$,
with
$T\in B_{R}(\ze)$
and
$h\in\A$,
(see Remark \ref{17081112}).
\par
Finally
we
give a different proof
respect
to 
\cite{Rudin1}
and in such a way generalizing
that
in 
\cite{BurenkovDiff},
of the  
known 
formula
in 
\eqref{16081549i},
in case
$0<r<\frac{R}{3}$.
\begin{enumerate}
\item
for all 
$
T\in B_{R}(\ze)
$
\begin{equation}
\label{07071547i}
g^{[1]}(T)
=
\sum_{n=1}^{\infty}
n\alpha_{n}\mathcal{L}(T)^{n-1}
-
\left\{
\sum_{p=0}^{\infty}
\left\{
\sum_{n=p+2}^{\infty}
(n-p-1)
\alpha_{n}
\mathcal{L}(T)^{n-(2+p)}
\right\}
\RM(T)^{p}
\right\}
C(T)
\end{equation}
(here
all the series
converge
absolutely
uniformly
on
$B_{r}(\ze)$
for all
$
0<r<R
$),
\item
for all $T\in B_{R}(\ze)$
\begin{equation}
\label{07071636i}
g^{[1]}(T)
=
\sum_{n=1}^{\infty}
n\alpha_{n}\mathcal{L}(T)^{n-1}
-
\sum_{k=2}^{\infty}
\left\{
\sum_{n=k}^{\infty}
\alpha_{n}
\mathcal{L}(T)^{n-k}
\right\}
C(T^{k-1}).
\end{equation}
(here
all
the series
converge 
absolutely
uniformly
on
$B_{r}(\ze)$
for all
$0<r<R$),
\item
for all
$
T\in
B_{\frac{R}{3}}(\ze)
$
\begin{equation}
\label{16081549i}
g^{[1]}(T)
=
\sum_{p=1}^{\infty}
\frac{1}{p!}
g^{(p)}(\RM(T))
C(T)^{p-1}.
\end{equation}
(here
the
series
converges 
absolutely
uniformly
on
$
B_{r}(\ze)
$
for
all
$
0<r<\frac{R}{3}
$,
$g^{(p)}:\K\to\K$
is
the
$p-$th derivative
of the function
$g$
and 
$g^{(p)}(\RM(T))$
is given in \eqref{18270701}).
\par
Finally
we applied these results
in
Corollary \ref{08051458}, 
Remarks. \ref{G08051458} and \ref{14081324},
for 
describing the differential map
of a series of vector-valued functions
differentiable on domains in Banach spaces,
and
in Cor. \ref{09051552}
to study
the differential map of the function
$X\supseteq D\ni x\mapsto g(\mc{T}(x))\in B(G)$.
Here $G$ and $X$ are Banach spaces, 
$D$ is an open set of $X$,
$g$ is the operator-valued map coming by the analytic
functional calculus on $G$ and
$\mc{T}:D\to B(G)$ is a differential map, where 
$B(G)$ is the unitary Banach algebra of all bounded linear operators on $G$.
\end{enumerate}
\section{
Fr\'{e}chet
differential
of
a
power
series
of
differentiable
functions
}
\label{15061005}
\begin{notations}
\label{05051541}
We 
denote 
by
$\N$
the set of all natural numbers
$
\{
0,1,2,...
\}
$.
Let 
$\K\in\{\R,\C\}$
and
$\lr{G}{\|\cdot\|_{G}}$,
or simply
$G$,
be a
Banach space over
$\K$, 
then 
for all
$a\in G$
and
$r>0$
we define the open ball centered in $a$
of radius $r$, to be the following set
$
B_{r}(a)
\doteqdot
\{v\in G\mid\|v-a\|_{G}<r\}
$,
hence its
closure in $G$
is
$
\ov{B}_{r}(a)
\doteqdot
\ov{B_{r}(a)}
=
\{v\in G\mid\|v-a\|_{G}\leq r\}
$.
\par\hspace{12pt}
Let 
$F,G$ 
be
two 
Banach spaces over $\K$, 
briefly
$\K-$Banach spaces,
then 
$
\lr{B(F,G)}{\|\cdot\|_{B(F,G)}}
$,
will 
denote
the $\K-$Banach space 
of all linear continuous mappings
of $F$ to $G$
and
$
\|U\|_{B(F,G)}
\doteqdot
\sup_{\|v\|_{F}\leq 1}
\|U(v)\|_{G}
$,
we also
set
$
\lr{B(G)}{\|\cdot\|_{B(G)}}
\doteqdot
\lr{B(G,G)}{\|\cdot\|_{B(G,G)}}
$.
\par\hspace{12pt}
Let
$\{G_{1},...,G_{n}\}$ 
be
a finite set of $\K-$Banach spaces,
then 
$
\lr{
\prod_{k=1}^{n}G_{k}
}
{
\|\cdot\|_{\prod_{k=1}^{n}G_{k}}
}
$
is the 
Banach space
where
$\prod_{k=1}^{n}G_{k}$
is the product of the vector spaces
$\{G_{1},...,G_{n}\}$ ,
and
$
\|(v_{1},...,v_{n})\|_{\prod_{k=1}^{n}G_{k}}
\doteqdot
\max_{k\in\{1,...,n\}}
\|v_{k}\|_{G_{k}}
$.
\par\hspace{12pt}
If $G_{k}=G$
for all
$k\in\{1,...,n\}$,
then
we will use the following
notation
$
\lr{G^{n}}{\|\cdot\|_{G^{n}}}
\doteqdot
\lr{\prod_{k=1}^{n}G_{k}}
{
\|\cdot\|_{\prod_{k=1}^{n}G_{k}}
}
$.
Let
$\{F_{1},...,F_{n},G\}$ 
be
a finite set of $\K-$Banach spaces,
then 
$
B_{n}(\prod_{k=1}^{n}F_{k};G)
$
is
the $\K-$vector space
of all $n-$multilinear continuous mappings
defined on $\prod_{k=1}^{n}F_{k}$ 
with
values in $G$.
If 
$F_{k}=F$
for all
$k\in\{1,...,n\}$,
then we set
$
B_{n}(F^{n};G)
\doteqdot
B_{n}(\prod_{k=1}^{n}F_{k};G)
$.
\par\hspace{12pt}
In the sequel
we shall
deal with 
Fr\'{e}chet differentiable functions
$$
f:U\subseteq F\to G
$$
defined on an open set
$U$ of a 
$\K-$Banach space 
$F$
and with values in 
a 
$\K-$Banach space 
$G$.
Its 
Fr\'{e}chet
differential function
will be denoted by
$$
f^{[1]}:U\subseteq F\to B(F,G).
$$
Recall
that
a map
$
f:U\subseteq F\to G
$
is Fr\'{e}chet differentiable at
$x_{0}\in U$
if
there exists 
a 
$T\in B(F,G)$
such that
$$
\lim_{
\begin{subarray}{l}
h\to\ze
\\ 
h\neq\ze
\end{subarray}
}
\frac{
\|
f(x_{0}+h)
-
f(x_{0})
-
T(h)
\|_{G}
}
{\|h\|_{F}}
=0.
$$
$T$ is called the 
Fr\'{e}chet
differential of $f$
at $x_{0}$
and is denoted by
$f^{[1]}(x_{0})$.
$f$ is 
Fr\'{e}chet
differentiable 
on $U$
if 
$f$
is
Fr\'{e}chet
differentiable 
at each $x\in U$,
and in this case 
the
map
$
f^{[1]}:
U\to B(F,G)
$
is called
the 
Fr\'{e}chet
differential function
of
$f$.
For the properties of
Fr\'{e}chet differentials
see
Ch $8$
of the Dieudonne book
\cite{Dieud1}.
\par\hspace{12pt}
\vspace{10pt}
Let 
$\A$ 
be an 
associative 
algebra 
over $\K$
(or briefly 
associative 
algebra)
then
the standard
Lie product on $\A$
is
the following map
$$
[\cdot,\cdot]:\A\times\A
\ni
(A,B)
\mapsto
[A,B]
\doteqdot
AB-BA
\in
\A
$$
the commutator 
of $A,B$,
and
for all $T\in\A$
the adjoint linear map of $T$
is so defined
$ad(T):\A\ni h\mapsto [T,h]\in\A$.
We denote
by
$
\A^{\A}
$
the set of all maps
from
$\A$
to
$\A$,
let
$
\RM:\A\to\A^{\A}
$
and
$
\mathcal{L}:\A\to\A^{\A}
$
be 
defined
by
\begin{equation}
\label{16051546}
\begin{cases}
\RM(T):
\A\ni h\mapsto T h\in\A
\\
\mathcal{L}(T):
\A\ni h\mapsto h T\in\A
\end{cases}
\end{equation}
for all $T\in\A$.
We
also
define
the
map
$
C:\A\to\A^{\A}
$
by
$$
C
\doteqdot
-ad
=
\mathcal{L}
-
\RM.
$$
We consider for any
$n\in\N$
the following mapping
$$
u_{n}:\A\ni T\mapsto T^{n}\in\A.
$$
A
Banach algebra 
over $\K$
(or briefly Banach algebra),
see for example
\cite{dal}
or 
\cite{palmer},
is an associative algebra
$\A$
over
$\K$
with a 
norm
$\|\cdot\|$
on it
such that
$\lr{\A}{\|\cdot\|}$
is a Banach space
and for all
$A,B\in\A$
we have
$$
\|A B\|
\leq
\|A\|
\|B\|.
$$
If $\A$
contains
the unit element
then it is called
unitary 
Banach algebra.
We assume for any unitary Banach algebra
with unit $\un$ that $\|\un\|=1$.
\par\hspace{12pt}
It is easy to verify directly 
that for all $T_{1},T_{2}\in\A$
\begin{equation}
\label{11071436}
[\RM(T_{1}),\mathcal{L}(T_{2})]=\ze.
\end{equation}
By recalling  
definition
\eqref{16051546}
we have for all
$T,h\in\A$
that
$
\|
\RM(T)
(h)
\|_{\A}
\leq
\|T\|_{\A}
\|h\|_{\A}
$,
and
$
\|
\mathcal{L}(T)(h)
\|_{\A}
\leq
\|T\|_{\A}
\|h\|_{\A}
$,
hence
\begin{equation}
\label{10501634}
\RM(T),
\mathcal{L}(T)
\in
B(\A)
\end{equation}
with
\begin{equation}
\label{13221607}
\|
\RM(T)
\|_{B(\A)} 
\leq
\|T\|_{\A},
\|
\mathcal{L}(T)
\|_{B(\A)} 
\leq
\|T\|_{\A},
\|
C(T)
\|_{B(\A)}
\leq
2
\|T\|_{\A}.
\end{equation}
Since
$
\mathcal{L}
$
and
$
\RM
$
are linear mappings
we can conclude that
\begin{equation}
\label{16051547}
\begin{cases}
\mathcal{L},
\RM
\in
B(\A,B(\A))
\\
\|\RM\|_{B(\A,B(\A))},
\|\mathcal{L}\|_{B(\A,B(\A))}
\leq
1.
\end{cases}
\end{equation}
\par
Finally
for 
$l,k\in\N$ and
$\{A_{j}\}_{j=0}^{l}\subset\A$
we use the following conventions
$\prod_{j=k}^{l}A_{j}=\un$
and
$\sum_{j=k}^{l}A_{j}=\ze$
for
$l<k$.
\end{notations}
We now present 
a simple
formula
which 
will be used 
later
to decompose
the commutator
$C(T^{n})$
in
terms
of 
$C(T)$.
\begin{lemma}
\label{06051245}
Let 
$\A$ be 
an 
associative
algebra,
then
for all
$
n\in\N$
and
$A_{1},...,A_{n+1},B\in\A$
we have
\begin{equation}
\label{29081729}
\left[
\prod_{k=1}^{n+1}
A_{k},
B
\right]
=
\sum_{s=0}^{n}
\left(
\prod_{k=1}^{s}
A_{k}
\right)
[A_{s+1},B]
\prod_{j=s+2}^{n+1}
A_{j}.
\end{equation}
\end{lemma}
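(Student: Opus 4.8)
The plan is to prove the identity by a direct telescoping computation, which avoids induction altogether and makes the combinatorial structure of the right-hand side transparent. The guiding observation is that, once the commutator $[A_{s+1},B]=A_{s+1}B-BA_{s+1}$ is expanded, each summand on the right splits into a \emph{forward} piece, in which $B$ sits immediately to the right of $A_{s+1}$, and a \emph{backward} piece, in which $B$ sits immediately to the left of $A_{s+1}$; consecutive summands then cancel in pairs.

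Concretely, I would fix $n\geq 1$ and $A_{1},\dots,A_{n+1},B\in\A$ and, for each $s\in\{0,1,\dots,n+1\}$, introduce the intermediate product
\begin{equation*}
P_{s}
\doteqdot
\left(\prod_{k=1}^{s}A_{k}\right)
B
\prod_{j=s+1}^{n+1}A_{j},
\end{equation*}
adopting the same convention as in the statement, namely that empty products are simply omitted; thus $P_{0}=B\prod_{j=1}^{n+1}A_{j}$ carries no left factor and $P_{n+1}=\left(\prod_{k=1}^{n+1}A_{k}\right)B$ carries no right factor. Expanding the $s$-th summand of the right-hand side and regrouping gives
\begin{equation*}
\left(\prod_{k=1}^{s}A_{k}\right)[A_{s+1},B]\prod_{j=s+2}^{n+1}A_{j}
=
P_{s+1}-P_{s},
\end{equation*}
since the factor $A_{s+1}B$, flanked by $\prod_{k=1}^{s}A_{k}$ on the left and $\prod_{j=s+2}^{n+1}A_{j}$ on the right, reconstitutes exactly $P_{s+1}$, while the factor $BA_{s+1}$ likewise reconstitutes $P_{s}$.

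It then remains only to sum over $s$ and telescope:
\begin{equation*}
\sum_{s=0}^{n}
\left(\prod_{k=1}^{s}A_{k}\right)[A_{s+1},B]\prod_{j=s+2}^{n+1}A_{j}
=
\sum_{s=0}^{n}(P_{s+1}-P_{s})
=
P_{n+1}-P_{0}
=
\left(\prod_{k=1}^{n+1}A_{k}\right)B-B\prod_{k=1}^{n+1}A_{k},
\end{equation*}
and the last expression is by definition $\left[\prod_{k=1}^{n+1}A_{k},B\right]$, which is the asserted identity.

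A completely equivalent route is induction on $n$: the base case $n=1$ is the Leibniz rule $[A_{1}A_{2},B]=A_{1}[A_{2},B]+[A_{1},B]A_{2}$, and the inductive step factors $\prod_{k=1}^{n+2}A_{k}=\left(\prod_{k=1}^{n+1}A_{k}\right)A_{n+2}$, applies the Leibniz rule once, and then feeds in the inductive hypothesis. Either way there is no analytic content and hence no serious obstacle; the single point that genuinely requires care is the bookkeeping of the empty-product convention at the two endpoints $s=0$ and $s=n$ --- precisely the degenerate cases flagged in the statement --- together with checking that the telescoping endpoints $P_{0}$ and $P_{n+1}$ collapse to the two terms of the commutator $\left[\prod_{k=1}^{n+1}A_{k},B\right]$.
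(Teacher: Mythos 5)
Your telescoping argument is correct: the regrouping $\left(\prod_{k=1}^{s}A_{k}\right)[A_{s+1},B]\prod_{j=s+2}^{n+1}A_{j}=P_{s+1}-P_{s}$ checks out term by term, the endpoints $P_{0}=B\prod_{j=1}^{n+1}A_{j}$ and $P_{n+1}=\left(\prod_{k=1}^{n+1}A_{k}\right)B$ are handled consistently with the empty-product convention, and the sum collapses to the commutator exactly as claimed. The paper, however, proceeds by the other route you mention only in passing: it verifies the Leibniz rule $[A_{1}A_{2},B]=A_{1}[A_{2},B]+[A_{1},B]A_{2}$ directly for $n=1$, then inducts, factoring $\prod_{k=1}^{n+1}A_{k}=\left(\prod_{k=1}^{n}A_{k}\right)A_{n+1}$, applying the Leibniz rule once, and invoking the inductive hypothesis on the first $n$ factors. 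The two proofs are of comparable length, but they buy slightly different things: your telescoping version is a single closed computation with no induction and makes the combinatorial content visible --- the right-hand side is literally the record of sliding $B$ from the far left of the product to the far right, one factor at a time --- whereas the paper's induction isolates the Leibniz rule as the only identity ever used, which fits its later applications (Corollary \ref{06051405} specializes all $A_{k}$ to $T$, where the inductive structure mirrors the recursion $T^{n+1}=T^{n}\cdot T$). Both arguments are purely algebraic, valid in any associative algebra, and neither requires a unit; your proof is a legitimate and arguably more transparent alternative to the one in the paper.
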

\proof
We shall prove the statement
by induction.
For $n=0$
\eqref{29081729}
is trivial.
Let
\eqref{29081729}
be true 
for
$n-1$,
then
for each
$
A_{1},...,A_{n+1},B\in\A
$
we have 
\begin{alignat*}{2}
\left[
\prod_{k=1}^{n+1}
A_{k},
B
\right]
&
=
\left[
\left(
\prod_{k=1}^{n}
A_{k}
\right)
A_{n+1}
,
B
\right]
\\
\intertext{
and since
$
[A_{1}A_{2},B]
=
A_{1}
[A_{2},B]
+
[A_{1},B]
A_{2}
$
it follows
}
&
=
\left(
\prod_{k=1}^{n}
A_{k}
\right)
\left[
A_{n+1}
,
B
\right]
+
\left[
\prod_{k=1}^{n}
A_{k},B
\right]
A_{n+1}
\\
\intertext{
and
by hypothesis
of the induction
we conclude
}
&
=
\left(
\prod_{k=1}^{n}
A_{k}
\right)
\left[
A_{n+1},B
\right]
+
\sum_{s=0}^{n-1}
\left(
\prod_{k=1}^{s}
A_{k}
\right)
[A_{s+1},B]
\left(
\prod_{j=s+2}^{n}
A_{j}
\right)
A_{n+1}
\\
&
=
\sum_{s=0}^{n}
\left(
\prod_{k=1}^{s}
A_{k}
\right)
[A_{s+1},B]
\prod_{j=s+2}^{n+1}
A_{j}.
\end{alignat*}
\endproof
\begin{corollary}
\label{06051405}
Let 
$\A$ 
be 
a
unitary
associative
algebra,
then
for all
$n\in\N$
and
$T\in\A$
we have 
$$
C(T^{n+1})
=
\sum_{s=0}^{n}
\RM(T)^{s}
C(T)
\mathcal{L}(T)^{n-s}
=
\sum_{s=0}^{n}
\RM(T)^{s}
\mathcal{L}(T)^{n-s}
C(T).
$$
\end{corollary}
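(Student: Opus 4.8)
The plan is to evaluate both operator identities at an arbitrary element $h\in\A$, thereby reducing the claim to an equality in $\A$ that is essentially a special case of Lemma \ref{06051245}. First I would fix $n\in\N$ and $h\in\A$ and note, from the definition of $C$, that
$$
C(T^{n+1})(h)=hT^{n+1}-T^{n+1}h=-[T^{n+1},h].
$$
Applying Lemma \ref{06051245} with $A_{1}=\cdots=A_{n+1}=T$ and $B=h$, and using $\prod_{k=1}^{s}A_{k}=T^{s}$, $[A_{s+1},B]=[T,h]$ and $\prod_{j=s+2}^{n+1}A_{j}=T^{n-s}$, I would obtain
$$
[T^{n+1},h]=\sum_{s=0}^{n}T^{s}[T,h]\,T^{n-s},
$$
so that, after multiplying by $-1$ and writing $[h,T]=-[T,h]$,
$$
C(T^{n+1})(h)=\sum_{s=0}^{n}T^{s}[h,T]\,T^{n-s}.
$$

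The second step is to recast each summand in terms of the operators of \eqref{16051546}. Since left multiplication by $T$ is $\RM(T)$, right multiplication by $T$ is $\mathcal{L}(T)$, and $[h,T]=C(T)(h)$, each term becomes
$$
T^{s}[h,T]\,T^{n-s}=\RM(T)^{s}\mathcal{L}(T)^{n-s}C(T)(h).
$$
As $h\in\A$ was arbitrary, this yields the second of the two asserted forms,
$$
C(T^{n+1})=\sum_{s=0}^{n}\RM(T)^{s}\mathcal{L}(T)^{n-s}C(T).
$$

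To derive the first form from the second, I would invoke the commutation relation \eqref{11071436}. Because $\RM(T)$ and $\mathcal{L}(T)$ commute, the operator $C(T)=\mathcal{L}(T)-\RM(T)$ commutes with both $\RM(T)$ and $\mathcal{L}(T)$; in particular $\mathcal{L}(T)^{n-s}C(T)=C(T)\mathcal{L}(T)^{n-s}$, so each summand rearranges to $\RM(T)^{s}C(T)\mathcal{L}(T)^{n-s}$ and the two forms coincide. I expect no genuine obstacle here: all the combinatorial content is supplied by Lemma \ref{06051245}, and the only points needing care are the sign arising from $[T^{n+1},h]=-C(T^{n+1})(h)$ and the paper's convention that $\RM$ denotes left and $\mathcal{L}$ denotes right multiplication, which must be tracked consistently so that the exponents $s$ and $n-s$ land on the correct side.
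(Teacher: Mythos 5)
Your proof is correct and follows essentially the same route as the paper: the paper likewise obtains the form $\sum_{s=0}^{n}\RM(T)^{s}\mathcal{L}(T)^{n-s}C(T)$ by specializing Lemma \ref{06051245} to $A_{1}=\cdots=A_{n+1}=T$, and then passes to the form with $C(T)$ in the middle via the commutation relation \eqref{11071436}. You merely make explicit the sign bookkeeping ($C(T^{n+1})(h)=-[T^{n+1},h]$) and the convention that $\RM$ is left and $\mathcal{L}$ is right multiplication, which the paper's one-line proof leaves implicit.
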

\proof
The second
equality
follows
by
Lemma \ref{06051245}
where
$A_{1}=A_{2}=...=A_{n+1}=T$,
the first one
by
the second
and
\eqref{11071436}.
\endproof
The following equality is stated without proof
in
the exercise $19$, $\S 1$, Ch. $1$
of \cite{BourbLie}.
For the sake 
of completeness
we give a proof.
\begin{lemma}
\label{01071957}
Let 
$\A$
be 
a
unitary
associative 
algebra,
then we have for all
$T\in\A$
and 
$n\in\N$
that 
$$
C(T)^{n}
=
\sum_{k=0}^{n}
(-1)^{k}
\dbinom{n}{k}
\RM(T)^{k}
\mathcal{L}(T)^{n-k}.
$$
\end{lemma}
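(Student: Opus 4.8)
The plan is to read the right-hand side as a binomial expansion. By the very definition $C=\mathcal{L}-\RM$, we have $C(T)=\mathcal{L}(T)-\RM(T)$ for every $T\in\A$, so that
\[
C(T)^{n}=\bigl(\mathcal{L}(T)-\RM(T)\bigr)^{n}
\]
as an identity in the algebra $B(\A)$. The whole proof rests on the observation that the two operators $\mathcal{L}(T)$ and $\RM(T)$ commute: this is exactly \eqref{11071436} with $T_{1}=T_{2}=T$, which gives $[\RM(T),\mathcal{L}(T)]=\ze$, i.e.\ $\RM(T)\mathcal{L}(T)=\mathcal{L}(T)\RM(T)$ in $B(\A)$. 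So the first step I would record is this commutation relation, since it is the one nontrivial input.

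With commutativity in hand, the formula is nothing more than the ordinary binomial theorem applied to the pair of commuting elements $\mathcal{L}(T)$ and $-\RM(T)$ of the (in general noncommutative) algebra $B(\A)$. Expanding,
\[
\bigl(\mathcal{L}(T)-\RM(T)\bigr)^{n}
=\sum_{k=0}^{n}\dbinom{n}{k}\bigl(-\RM(T)\bigr)^{k}\mathcal{L}(T)^{n-k}
=\sum_{k=0}^{n}(-1)^{k}\dbinom{n}{k}\RM(T)^{k}\mathcal{L}(T)^{n-k},
\]
which is the asserted identity, with the convention $\RM(T)^{0}=\mathcal{L}(T)^{0}=\mathrm{id}_{\A}$ covering the case $n=0$.

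Since one may legitimately worry about invoking the binomial theorem inside a noncommutative algebra, I would make the argument self-contained by a short induction on $n$. The base case $n=0$ (or $n=1$) is immediate from $C(T)=\mathcal{L}(T)-\RM(T)$. For the inductive step I would write $C(T)^{n+1}=C(T)^{n}\,C(T)$, substitute the inductive hypothesis for $C(T)^{n}$ and the expression $\mathcal{L}(T)-\RM(T)$ for $C(T)$, and then use the commutation $\RM(T)\mathcal{L}(T)=\mathcal{L}(T)\RM(T)$ to move all factors of $\RM(T)$ to the left and all factors of $\mathcal{L}(T)$ to the right; collecting terms with the Pascal rule $\binom{n}{k-1}+\binom{n}{k}=\binom{n+1}{k}$ then yields the formula for $n+1$.

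There is essentially no obstacle here: the only step requiring any care is the justification that factors of $\RM(T)$ and $\mathcal{L}(T)$ may be freely interchanged, and this is guaranteed once and for all by \eqref{11071436}. Everything else is the routine bookkeeping of a commuting binomial expansion, so the lemma follows directly.
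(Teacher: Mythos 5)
Your proposal is correct and follows exactly the paper's argument: the paper's own proof consists of the single observation that, by \eqref{11071436}, $\mathcal{L}(T)$ and $\RM(T)$ commute, so the identity follows from the binomial theorem applied to $C(T)=\mathcal{L}(T)-\RM(T)$. Your added induction with the Pascal rule merely makes explicit what the paper leaves implicit, so the two proofs are essentially the same.
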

\proof
Since 
by 
\eqref{11071436}
$
\mathcal{L}(T)
$
and
$\RM(T)$
commute
the statement
follows.
\endproof
\begin{lemma}
\label{01072007}
Let 
$\A$ 
be a 
unitary
associative algebra.
Then for all
$T\in\A$
and 
$n\in\N$
we have
\begin{equation}
\label{02071725}
\sum_{p=1}^{n}
\dbinom{n}{p}
\RM(T)^{n-p}
C(T)^{p-1}
=
\sum_{s=1}^{n}
\RM(T)^{n-s}
\mathcal{L}
(T)^{s-1}.
\end{equation}
\end{lemma}
\proof
Since 
$\mathcal{L}=C+\RM$
and 
since
$C(T)$
and
$\RM(T)$
commute
(cf.\eqref{11071436})
we have
\begin{alignat*}{2}
\sum_{p=1}^{n}
\RM(T)^{n-p}\mathcal{L}(T)^{p-1}
&
=
\sum_{p=1}^{n}
\RM(T)^{n-p}
(C(T)+\RM(T))^{p-1}
\notag
\\
&
=
\sum_{p=1}^{n}
\RM(T)^{n-p}
\sum_{k=0}^{p-1}
\dbinom{p-1}{k}
\RM(T)^{p-1-k}
C(T)^{k}
\notag
\\
&
=\sum_{k=0}^{n-1}
\left(\sum_{p=k+1}^{n}
\dbinom{p-1}{k}
\right)
\RM(T)^{n-1-k}
C(T)^{k}
\notag
\\
&
=\sum_{s=1}^{n}
\left(\sum_{p=s}^{n}
\dbinom{p-1}{s-1}
\right)
\RM(T)^{n-s}
C(T)^{s-1}
\notag
\\
&
=\sum_{s=1}^{n}
\dbinom{n}{s}
\RM(T)^{n-s}
C(T)^{s-1}.
\end{alignat*}
\endproof
\begin{definition}
\label{03071132}
Let
$\A$
be a
unitary
Banach algebra,
$
f(\lambda)
=
\sum_{n=0}^{\infty}
\alpha_{n}
\lambda^{n}
$,
where
the coefficients
$\alpha_{n}\in\K$
and
has the radius of convergence $R>0$.
Then for all
$T\in\A$
such that
$\|T\|_{\A}<R$
we can define
$$
f(T)
\doteqdot
\sum_{n=0}^{\infty}
\alpha_{n}
T^{n}
\in
\A.
$$
\end{definition}
It is well-known that the map $u_{n}$
is Fr\'{e}chet differentiable.
For the sake of completeness we give a direct proof
of the Fr\'{e}chet differential function of $u_{n}$ in several
forms which will be used in the sequel.
\begin{lemma}
\label{05051540a}
Let $\A$ be a unitary Banach algebra.
Then
for all
$n\in\N$
the map
$
u_{n}:
\A
\ni
T
\mapsto
T^{n}
\in
\A
$
is Fr\'{e}chet differentiable
and its Fr\'{e}chet differential map
$
u_{n}^{[1]}:
\A\to B(\A)
$
is such that
for all
$T\in\A$
and
$n\in\N$
\begin{alignat}{1}
\label{03071049}
u_{n}^{[1]}(T)
&
=
\sum_{p=1}^{n}
\RM(T)^{n-p}
\mathcal{L}
(T)^{p-1}
\notag
\\
&
=
n 
\mathcal{L}(T)^{n-1}
-
\sum_{k=2}^{n}
\mathcal{L}(T)^{n-k}
C(T^{k-1})
\notag
\\
&
=
\sum_{p=1}^{n}
\dbinom{n}{p}
\RM(T)^{n-p}
C(T)^{p-1}
\notag
\\
&
=
n
\mathcal{L}(T)^{n-1}
-
\sum_{s=0}^{n-2}
(n-s-1)
\mathcal{L}(T)^{n-(s+2)}
\RM(T)^{s}
C(T),
\end{alignat}
and
\begin{equation}
\label{12441607}
\|u_{n}^{[1]}(T)\|_{B(\A)}
\leq 
n\|T\|_{\A}^{n-1}.
\end{equation}
\end{lemma}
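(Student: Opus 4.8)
The plan is to compute the Frechet differential of $u_n$ directly from the definition for $n\geq 2$, obtaining the first of the four expressions in \eqref{03071049}, and then to derive the remaining three as purely algebraic identities in $B(\A)$ using the lemmas already established. The base cases are immediate: $u_0$ is the constant map $T\mapsto\un$, so $u_0^{[1]}(T)=\ze$, while $u_1$ is the identity map, so $u_1^{[1]}(T)=\un\in B(\A)$.

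For $n\geq 2$ I would expand $(T+h)^n$ as the sum over all words of length $n$ in the two letters $T$ and $h$. The unique word with no occurrence of $h$ contributes $T^n$; the words containing exactly one $h$ contribute $\sum_{k=0}^{n-1}T^{k}hT^{n-1-k}$; and every remaining word contains at least two factors $h$. Submultiplicativity of the norm bounds the sum of the latter terms by $\sum_{j=2}^{n}\binom{n}{j}\|T\|_{\A}^{n-j}\|h\|_{\A}^{j}$, which is $O(\|h\|_{\A}^{2})$ and hence $o(\|h\|_{\A})$ as $h\to\ze$. This shows $u_n$ is Frechet differentiable at $T$ with $u_n^{[1]}(T)(h)=\sum_{k=0}^{n-1}T^{k}hT^{n-1-k}$. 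Recalling \eqref{16051546} and that $\RM(T)$ and $\mathcal{L}(T)$ commute by \eqref{11071436}, one has $T^{k}hT^{n-1-k}=\RM(T)^{k}\mathcal{L}(T)^{n-1-k}(h)$, and reindexing by $p=n-k$ yields the first expression $u_n^{[1]}(T)=\sum_{p=1}^{n}\RM(T)^{n-p}\mathcal{L}(T)^{p-1}$.

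The three remaining forms then follow algebraically. The equality with the third expression is exactly the content of Lemma \ref{01072007}. For the second expression I would use the observation that $\mathcal{L}(T)^{m}=\mathcal{L}(T^{m})$ and $\RM(T)^{m}=\RM(T^{m})$, so that $\mathcal{L}(T)^{m}-\RM(T)^{m}=C(T^{m})$; writing $\sum_{p=1}^{n}\RM(T)^{n-p}\mathcal{L}(T)^{p-1}=n\mathcal{L}(T)^{n-1}-\sum_{p=1}^{n}\mathcal{L}(T)^{p-1}\left(\mathcal{L}(T)^{n-p}-\RM(T)^{n-p}\right)$ and substituting then gives the second expression after reindexing (the $p=n$ term drops out since $C(\un)=\ze$). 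Finally, substituting Corollary \ref{06051405} for each $C(T^{k-1})$ into the second expression and interchanging the order of the two finite sums produces the fourth expression.

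The norm bound \eqref{12441607} is read off from the first expression: by \eqref{13221607} we have $\|\RM(T)\|_{B(\A)},\|\mathcal{L}(T)\|_{B(\A)}\leq\|T\|_{\A}$, so $\|u_n^{[1]}(T)\|_{B(\A)}\leq\sum_{p=1}^{n}\|T\|_{\A}^{n-p}\|T\|_{\A}^{p-1}=n\|T\|_{\A}^{n-1}$. The only genuinely delicate step is the remainder estimate in the differentiation: one must correctly count, in the noncommutative setting, the words contributing to each power of $\|h\|_{\A}$ and verify that the first-order term is precisely $\sum_{k=0}^{n-1}T^{k}hT^{n-1-k}$ before reindexing. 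The subsequent identities are routine finite manipulations resting on the commutativity \eqref{11071436} and the already-proven lemmas.
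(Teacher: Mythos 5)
Your proposal is correct and follows essentially the same route as the paper: a direct expansion of $(T+h)^{n}$ isolating the part linear in $h$ with a remainder of degree at least $2$ in $h$ (your explicit binomial count simply makes precise the paper's polynomial remainder $\T(h;T;2)$ estimate), then the first form $\sum_{p=1}^{n}\RM(T)^{n-p}\mathcal{L}(T)^{p-1}$, the third form via Lemma \ref{01072007}, and the fourth by substituting Corollary \ref{06051405} into the second and swapping the finite sums, exactly as in the paper. Your derivation of the second form via the operator identity $\mathcal{L}(T)^{m}-\RM(T)^{m}=C(T^{m})$ is merely the operator-level restatement of the paper's elementwise rewriting $T^{k-1}hT^{n-k}=[T^{k-1},h]T^{n-k}+hT^{n-1}$, so no essential difference arises there either.
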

\proof
For 
brevity
in this proof we 
write
$\|\cdot\|$
for
$\|\cdot\|_{\A}$.
The cases $n=0,1$ are trivial.
Assume that $n\in\N-\{0,1\}$
and $T,h\in\A$
$$
\sum_{p=1}^{n}
\RM(T)^{n-p}
\mathcal{L}
(T)^{p-1}
(h)
=
hT^{n-1}
+
ThT^{n-2}
+
...
+
T^{k-1}hT^{n-k}
+
...
+
T^{n-1}h
$$
so
$$
(T+h)^{n}
=
T^{n}
+
\sum_{p=1}^{n}
\RM(T)^{n-p}
\mathcal{L}
(T)^{p-1}
(h)
+
\mathfrak{T}(h;T;2).
$$
Here
$
\mathfrak{T}(h;T;2)
$
is a polynomial
in the two
variables
$T$
and
$h$
each monomial
of which 
is at least 
of
degree 
$2$
with respect to the variable
$h$.
Hence
\begin{equation}
\label{18081442}
\begin{aligned}
\lim_{h\to\ze}
\frac{
\|
(T+h)^{n}
-
T^{n}
-
\sum_{p=1}^{n}
\RM(T)^{n-p}
\mathcal{L}
(T)^{p-1}
(h)
\|
}
{\|h\|}
&
=
\lim_{h\to\ze}
\frac{
\|
\mathfrak{T}(h;T;2)
\|
}
{\|h\|}
\\
&
\leq
\lim_{h\to\ze}
\frac{
\mathfrak{T}(\|h\|;\|T\|;2)
}
{\|h\|}
=0.
\end{aligned}
\end{equation}
Here
$\mathfrak{T}(\|h\|;\|T\|;2)$
is 
the 
polynomial
in the variables
$\|h\|$
and
$\|T\|$
obtained
by
replacing
in
$\mathfrak{T}(h;T;2)$
the variable
$h$
with
$\|h\|$
and
$T$
with
$\|T\|$.
Hence
\begin{equation}
\label{18081513}
u_{n}^{[1]}(T)
=
\sum_{p=1}^{n}
\RM(T)^{n-p}
\mathcal{L}
(T)^{p-1}
\end{equation}
and
\eqref{12441607}
and
the
first 
of
equalities 
\eqref{03071049}
follow.
Therefore
we have
for all
$T\in\A$
and
$h\in\A$
\begin{equation*}
\begin{aligned}
u_{n}^{[1]}(T)(h)
&
=
hT^{n-1}
+
ThT^{n-2}
+
...
+
T^{k-1}hT^{n-k}
+
...
+
T^{n-1}h
\\
&
=
hT^{n-1}
+
[T,h]T^{n-2}+hT^{n-1}
+
...
+
[T^{k-1},h]T^{n-k}+hT^{n-1}
+
...
\\
&
+
[T^{n-1},h]+hT^{n-1},
\end{aligned}
\end{equation*}
hence
\begin{equation}
\label{03071057}
u_{n}^{[1]}(T)(h)
=
nhT^{n-1}
+
\sum_{k=2}^{n}
[T^{k-1},h]T^{n-k}.
\end{equation}
This is 
the second
equality in
\eqref{03071049}.
The fourth equality
in \eqref{03071049}
follows
by the second one,
by the commutativity
property
in
\eqref{11071436}
and by
Corollary
\ref{06051405}.
By the first equality in
\eqref{03071049}
and
Lemma
\ref{01072007}
we obtain
the
third
equality in
\eqref{03071049}.
\endproof
\begin{definition}
\label{02102006}
Let $S$ be
a no empty set
and $X$ 
be
a Banach space over $\K$,
then we define 
\begin{equation}
\label{08071208}
\mc{B}(S,X)
\doteqdot
\left
\{
F:S\to X
\mid
\|
F
\|_{\mc{B}(S,X)}
\doteqdot
\sup_{u\in S}
\|F(u)\|_{X}
<\infty
\right
\}.
\end{equation}
Then
$
\lr{\mc{B}(S,X)}{\|\cdot\|_{\mc{B}(S,X)}}
$
is a Banach space over $\K$
and the convergence in it is called
the 
\emph{
uniform convergence 
on $S$ 
in 
$\|\cdot\|_{X}-$topology
},
or
simply
when this does not cause confusion,
the
\emph{
uniform convergence 
on $S$ 
},
(see Ch.$10$ of \cite{BourGT}).
\par\hspace{12pt}
Let
$
\{f_{\alpha}\}_{\alpha\in D}
\subset
\mc{B}(S,X)
$
then 
the
sum
$
\sum_{\alpha\in D}f_{\alpha}
$
\emph{
converges
uniformly
on
$S$
}
\footnote{
For the general 
definition
of a 
summable family
in a Hausdorff commutative
topological  group 
$G$
see
Ch. $3$, \S $5.1.$
of \cite{BourGT},
in our case 
$G=\mc{B}(S,X)$.
}
if
the net of all 
finite partial sums
converges
in 
$\mc{B}(S,X)$,
i.e.
by denoting
with
$\p_{\omega}(D)$
the direct ordered
set
of all finite subsets
of $D$ ordered
by inclusion,
there exists
$W\in\mc{B}(S,X)$
such that
\begin{equation}
\label{02101604sum}
\lim_{J\in\p_{\omega}(D)}
\sup_{u\in S}
\left
\|
W(u)
-
\sum_{\alpha\in J}
f_{\alpha}(u)
\right
\|_{X}
=
0.
\end{equation}
The 
sum
$
\sum_{\alpha\in D}
f_{\alpha}
$
\emph{
converges
absolutely
uniformly
on
$S$
}
or
\emph{
converges
absolutely
uniformly
for
$u\in S$
}
if
$$
\sum_{\alpha\in D}
\sup_{u\in S}
\|
f_{\alpha}(u)
\|_{X}
\doteq
\lim_{J\in\p_{\omega}(D)}
\sum_{\alpha\in J}
\sup_{u\in S}
\left
\|
f_{\alpha}(u)
\right
\|_{X}
<\infty.
$$
Since
$\mc{B}(S,X)$
is
a
Banach space,
the
absolute
uniform
convergence
implies
uniform
convergence.
Similar
definitions
for
sequences
and
in particular
for
series
$
\sum_{n=0}^{\infty}
f_{n}
$,
by replacing
$D$
with
$\N$,
while
$\sum_{\alpha\in J}$
and
$\lim_{\alpha\in\p_{\omega}(D)}$
with
resp.
$\sum_{n=0}^{N}$
and
$\lim_{N\to\infty}$,
finally
$\sum_{\alpha\in D}$
with
$\sum_{n=0}^{\infty}$.
\end{definition}
Now
we shall
show
that
a
power series 
$
g(T)
\doteqdot
\sum_{n=0}^{\infty}
\alpha_{n}
T^{n}
$
in a Banach algebra $\A$
is
Fr\'{e}chet differentiable
term by term,
the corresponding
power
series
of
its
Fr\'{e}chet
differential
$g^{[1]}$
is
uniformly convergent
on
$B_{r}(\ze)$
in the norm topology
of
$B(\A)$
for
all $0<r<R$,
and finally
that
$g^{[1]}$
is continuous,
where
the radius of convergence
$R$
of 
$
\sum_{n=0}^{\infty}
\alpha_{n}
\lambda^{n}
$
is different to zero.
The 
proofs are based on
the well-known results
stating that uniform convergence
in Banach spaces,
preserves
Fr\'{e}chet differentiability
and continuity,
see Theorem
$8.6.3.$ of the \cite{Dieud1}
for the first and
Theorem $(2)$, 
$\S 1.6.$, Ch. $10$
of the \cite{BourGT}
for the second one.
\par\hspace{12pt}
The 
Fr\'{e}chet
differentiability
of $g$
can be seen as a particular
case
of
the Fr\'{e}chet 
differentiability
of 
a power
series
of
polynomials
between two
Banach spaces
over $\mathbb{K}$,
whose
proof
for $\mathbb{K}=\C$,
was given
for the first time
in
\cite{martin};
while
the one 
for $\mathbb{K}=\R$,
given for the first time
in \cite{Michal}, 
used
a weak form of Markoff's 
inequality for the derivative 
of a polynomial,
see \cite{Schaeffer}.
\par
If $E_1$ and $E_2$ are Banach spaces, a homogeneous polynomial 
of degree $n$ on $E_1$ to $E_2$ is a function $p_n(x)$ with values in $E_2$, 
defined for all elements $x$ in $E_1$ and having the properties 
(a) $p_n(tx)=t^np_n(x)$, (b) $p_n(x+ty)$ is a polynomial of degree not greater 
than $n$ in the numerical variable $t$, with coefficients in $E_2$, 
(c) $\|p_n(x)\|\leq m\|x\|^n$ for some constant $m$ and every $x$; 
the smallest $m$ satisfying (c) is called the modulus $m(p_n)$ of the 
homogeneous polynomial. 
A series of the form $f(x)=\sum_0^\infty p_n(x)$ is called a power series. 
\par
Michal
in \cite{Michal}, 
considers real Banach spaces only, 
and defines the radius of analyticity $r$ of the power series as the radius 
of convergence of the ordinary power series $\sum_0^\infty m(p_n)t^n$. 
He proves that if $r>0$ the function $f(x)$ has 
Fr\'{e}chet 
differentials 
of all orders when $\|x\|<r$ and that these differentials 
are given by successive term-by-term differentiation of the series for $f(x)$. 
For complex Banach spaces this result is well known. 
It was first proved in
\cite{martin}
\par\hspace{12pt}
Our proof
in Lemma \ref{05051540}
has the advantage of
giving 
for the particular
case of
Banach algebras
a unified approach
for both the cases
real and complex.
\par
\begin{lemma}
[
Fr\'{e}chet 
differentiability of a 
power series in a Banach algebra 
]
\label{05051540}
Let $\A$ be a unitary Banach algebra, 
$\{\alpha_{n}\}_{n\in\N}\subset\K$
be
such that
the radius
of
convergence
of
the series 
$
g(\lambda)
\doteqdot
\sum_{n=0}^{\infty}
\alpha_{n}
\lambda^{n}
$
is
$R>0$.
\begin{enumerate}
\item
The 
series
$$
\sum_{n=0}^{\infty}
\alpha_{n}u_{n}
$$
converges
absolutely
uniformly
on
$B_{r}(\ze)$
for all
$0<r<R$.
\footnote{
By 
Def.
\ref{02102006},
$$
\sum_{n=0}^{\infty}
\sup_{T\in B_{r}(\ze)}
\|
\alpha_{n}
T^{n}
\|_{\A}
<
\infty
$$
for all $0<r<R$.
}
Hence
we can define
the map
$
g:B_{R}(\ze)\to\A
$
as
$
g(T)
\doteqdot
\sum_{n=0}^{\infty}
\alpha_{n}u_{n}(T)
$.
\label{A05051540}
\item
$
g
$
is 
Fr\'{e}chet differentiable
on
$B_{R}(\ze)$
and 
\begin{equation}
\label{02101544}
g^{[1]}
=
\sum_{n=1}^{\infty}
\alpha_{n}u_{n}^{[1]}.
\end{equation}
Here
the
series
converges 
absolutely
uniformly on
$B_{r}(\ze)$,
for all
$0<r<R$
\footnote{
By 
Def.
\ref{02102006}
$$
\sum_{n=1}^{\infty}
\sup_{T\in B_{r}(\ze)}
\|
\alpha_{n}
u_{n}^{[1]}(T)
\|_{B(\A)}
<
\infty
$$
for all 
$0<r<R$.
}
and
$g^{[1]}$
is continuous. 
\label{B05051540}
\end{enumerate}
\end{lemma}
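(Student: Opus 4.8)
The plan is to interpret each partial sum of the two series as an element of the Banach space $\B(S,X)$ introduced in Remark \ref{02102006} (with $S=B_s(\ze)$), reduce both convergence assertions to the convergence of scalar power series, and then invoke the two stability theorems quoted just before the statement: Theorem $8.6.3$ of \cite{Dieud1}, which transfers Frechet differentiability across uniform limits, and Theorem $(2)$, $\S 1.6$, Ch. $10$ of \cite{BourGT}, which transfers continuity.

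For item \ref{A05051540} I would fix $0<s<R$ and dominate the general term on $B_s(\ze)$: for $n\ge 1$ submultiplicativity of the norm gives $\sup_{T\in B_s(\ze)}\|\alpha_n u_n(T)\|_{\A}=|\alpha_n|\sup_{T\in B_s(\ze)}\|T^{n}\|_{\A}\le|\alpha_n|s^{n}$, while the $n=0$ term is the constant $\alpha_0\un$. Since $s<R$, the scalar series $\sum_{n}|\alpha_n|s^{n}$ converges, which is precisely absolute uniform convergence of $\sum_n\alpha_n u_n$ on $B_s(\ze)$ in the sense of Remark \ref{02102006}. Because $B_R(\ze)=\bigcup_{0<s<R}B_s(\ze)$, every point of $B_R(\ze)$ lies in some $B_s(\ze)$, so the pointwise limit $g$ is defined on the whole ball.

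For item \ref{B05051540} the decisive input is the estimate \eqref{12441607} from Lemma \ref{05051540a}, namely $\|u_n^{[1]}(T)\|_{B(\A)}\le n\|T\|_{\A}^{n-1}$, which converts the operator-norm majorant into a scalar one: $\sup_{T\in B_s(\ze)}\|\alpha_n u_n^{[1]}(T)\|_{B(\A)}\le n|\alpha_n|s^{n-1}$. The majorant $\sum_{n\ge 1}n|\alpha_n|s^{n-1}$ is the term-by-term derivative of $\sum_n|\alpha_n|\lambda^{n}$ and hence has the same radius of convergence $R$ (because $\lim_n n^{1/n}=1$), so it converges for every $s<R$. This gives absolute, hence uniform, convergence of $\sum_{n\ge 1}\alpha_n u_n^{[1]}$ on each $B_s(\ze)$ inside $\B(B_s(\ze),B(\A))$.

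It then remains to feed this into the imported theorems. Fixing $0<s<R$ and writing $f_N\doteqdot\sum_{k=0}^{N}\alpha_k u_k$, each $f_N$ is Frechet differentiable on the convex (hence connected) open set $B_s(\ze)$ with $f_N^{[1]}=\sum_{k=1}^{N}\alpha_k u_k^{[1]}$ by Lemma \ref{05051540a}; the sequence $(f_N)$ converges by item \ref{A05051540} and $(f_N^{[1]})$ converges uniformly by the previous paragraph, so Theorem $8.6.3$ of \cite{Dieud1} yields that $g$ is Frechet differentiable on $B_s(\ze)$ with $g^{[1]}=\sum_{n\ge 1}\alpha_n u_n^{[1]}$; taking the union over $0<s<R$ gives \eqref{02101544} on $B_R(\ze)$. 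For continuity I would note that each $u_n^{[1]}$ is continuous, since by the first equality in \eqref{03071049} it is a finite sum of products of the continuous maps $\RM,\mathcal{L}\in B(\A,B(\A))$ of \eqref{16051547} with the continuous multiplication of $B(\A)$, so that $g^{[1]}$, being a uniform limit of the continuous $f_N^{[1]}$ on each $B_s(\ze)$, is continuous by the Bourbaki theorem. The main obstacle is organisational rather than computational: the key is to recognise that ``absolutely uniformly convergent'' is literally convergence in $\B(S,X)$, and that the only analytic fact actually required is the invariance of the radius of convergence under differentiation; once those are in place, the two stability theorems together with the patching from $B_s(\ze)$ to $B_R(\ze)$ finish the argument.
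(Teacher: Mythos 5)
Your proposal is correct and follows essentially the same route as the paper: the same majorants $|\alpha_{n}|s^{n}$ and, via \eqref{12441607}, $n|\alpha_{n}|s^{n-1}$ establish absolute uniform convergence on each $B_{s}(\ze)$, Theorem $8.6.3$ of \cite{Dieud1} yields the term-by-term differentiation \eqref{02101544}, and continuity of each $u_{n}^{[1]}$ (from the first equality in \eqref{03071049} together with \eqref{16051547} and continuity of the product in $B(\A)$) plus the Bourbaki uniform-limit theorem gives continuity of $g^{[1]}$. Your explicit remarks on connectedness of $B_{s}(\ze)$ and on the radius of convergence of the differentiated series are fine points the paper leaves implicit, but they do not change the argument.
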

\proof
For all
$r\in(0,R)$,
$T\in B_{r}(\ze)$
and
$n\in\N$
we have
$
\|\alpha_{n}T^{n}\|_{\A}
\leq
|\alpha_{n}|
\|T\|_{\A}^{n}
\leq
|\alpha_{n}|
r^n$,
so
$$
\sum_{n=0}^{\infty}
\sup_{T\in B_{r}(\ze)}
\|\alpha_{n}T^{n}\|_{\A}
\leq
\sum_{n=0}^{\infty}
|\alpha_{n}|r^{n}
<\infty.
$$
Which is statement $(1)$.
\par\hspace{12pt}
By
\eqref{12441607}
for all
$0<r<R$
$$
\sum_{n=0}^{\infty}
\sup_{T\in B_{r}(\ze)}
\|
\alpha_{n} u_{n}^{[1]}(T)
\|_{B(\A)}
\leq
\sum_{n=0}^{\infty}
|\alpha_{n}|n r^{n-1}
<
\infty.
$$
Hence
the series
$
\sum_{n=0}^{\infty}
\alpha_{n} u_{n}^{[1]}
$
converges 
absolutely
uniformly
on
$B_{r}(\ze)$
for all $0<r<R$.
Thus
the mapping
\begin{equation}
\label{08051437}
T
\ni
B_{R}(\ze)\subset\A
\mapsto
\sum_{n=0}^{\infty}
\alpha_{n} u_{n}^{[1]}(T)
\in
B(\A)
\end{equation}
is well defined 
on
$B_{R}(\ze)$
and
the series
converges
uniformly
for
$T\in B_{r}(\ze)$
for all $0<r<R$.
Hence
we can
apply 
Theorem
$8.6.3.$ of the \cite{Dieud1}
and then deduce 
\eqref{02101544}.
\par
\hspace{12pt}
Now 
it
remains to show the last 
part of the statement $(2)$,
i.e.
the continuity 
of the differential function 
$g^{[1]}$.
By the first part
of Lemma \ref{05051540a}
applied to the unitary
Banach algebra 
$B(\A)$
and by
\eqref{16051547}
for all
$n\in\N$
the maps
\begin{equation}
\label{10412403}
\A\ni T
\mapsto
\mathcal{L}(T)^{n}
\in
B(\A),
\,
\A\ni T
\mapsto
\RM(T)^{n}
\in
B(\A)
\end{equation}
and
the product on
$B(\A)\times B(\A)$ 
are continuous
in the norm 
topology
of
$B(\A)$,
so 
by
the first equality
in \eqref{03071049}
for all
$n\in\N$
\begin{equation}
\label{10051256}
u_{n}^{[1]}:\A\to B(\A)
\text{ 
is continuous.}
\end{equation}
By
\eqref{10051256},
the uniform 
convergence of which 
in
the
first part
of
statement $(2)$,
and 
finally by the fact that the 
set
of 
all
continuous maps
is closed 
with respect to the 
topology of uniform convergence, 
see for example 
Theorem $(2)$, 
$\S 1.6.$, Ch. $10$
of the \cite{BourGT},
we conclude that
for all
$0<r<R$
the mapping
$
g^{[1]}
\up
B_{r}(\ze):
B_{r}(\ze)
\subset
\A
\to
B(\A)
$
is continuous. 
This ends the proof 
of statement $(2)$.
\endproof
\begin{remark}
\label{14080915}
By 
statement $(2)$
of Lemma
\ref{05051540}
we have 
$$
g^{[1]}(T)(h)
=
\sum_{n=1}^{\infty}
\alpha_{n}u_{n}^{[1]}(T)(h).
$$
Here
the series
converges
absolutedly
uniformly 
for
$(T,h)\in B_{r}(\ze)\times B_{L}(\ze)$,
for all
$L>0$
and
$0<r<R$,
i.e.
$$
\sum_{n=1}^{\infty}
\sup_{(T,h)\in B_{r}(\ze)\times B_{L}(\ze)}
|\alpha_{n}|\|u_{n}^{[1]}(T)(h)\|_{\A}
<\infty
$$
\end{remark}
\begin{theorem}
[
\textbf{
Fr\'{e}chet
differential
of a power series
}
]
\label{05051540c}
Let $\A$ be a unitary Banach algebra, 
$\{\alpha_{n}\}_{n\in\N}\subset\K$
be
such that
the 
radius
of
convergence
of
the series
$
g(\lambda)
\doteqdot
\sum_{n=0}^{\infty}
\alpha_{n}
\lambda^{n}
$
is
$R>0$.
Then 
\begin{enumerate}
\item
for all $T\in B_{R}(\ze)$
\begin{equation}
\label{07071547}
g^{[1]}(T)
=
\sum_{n=1}^{\infty}
n\alpha_{n}\mathcal{L}(T)^{n-1}
-
\left\{
\sum_{p=0}^{\infty}
\left\{
\sum_{n=p+2}^{\infty}
(n-p-1)
\alpha_{n}
\mathcal{L}(T)^{n-(2+p)}
\right\}
\RM(T)^{p}
\right\}
C(T).
\end{equation}
Here
all the series
converge
absolutely
uniformly
on
$B_{r}(\ze)$
for all
$0<r<R$.
\label{E05051540c}
\item
for all $T\in B_{R}(\ze)$
\begin{equation}
\label{07071636}
g^{[1]}(T)
=
\sum_{n=1}^{\infty}
n\alpha_{n}\mathcal{L}(T)^{n-1}
-
\sum_{k=2}^{\infty}
\left\{
\sum_{n=k}^{\infty}
\alpha_{n}
\mathcal{L}(T)^{n-k}
\right\}
C(T^{k-1}).
\end{equation}
Here
all
the series
converge
absolutely
uniformly
on
$B_{r}(\ze)$
for all
$0<r<R$.
\label{F05051540c}
\item
For all
$T\in
B_{\frac{R}{3}}(\ze)$
\begin{equation}
\label{16081549}
g^{[1]}(T)
=
\sum_{p=1}^{\infty}
\frac{1}{p!}
(g)^{(p)}(\RM(T))
C(T)^{p-1}.
\end{equation}
Here
the
series
converges 
absolutely
uniformly
on
$
B_{r}(\ze)
$
for
all
$
0<r<\frac{R}{3}
$
and
$g^{(p)}:\K\to\K$
is
the
$p-$th derivative
of the function
$g$.
\label{C05051540c}
\end{enumerate}
\end{theorem}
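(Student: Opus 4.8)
The plan is to obtain all three formulas from the single identity $g^{[1]}=\sum_{n=1}^{\infty}\alpha_{n}u_{n}^{[1]}$ proved in Lemma \ref{05051540}, by inserting into it one of the four closed forms for $u_{n}^{[1]}(T)$ collected in \eqref{03071049} and then interchanging the order of summation in the resulting double series. Every such interchange rests on the same principle: in the Banach space $B(\A)$ a double series whose terms have summable norms may be reordered arbitrarily without altering its sum, so it suffices in each case to bound the sum of the norms, using $\|\RM(T)\|_{B(\A)}, \|\mathcal{L}(T)\|_{B(\A)}\le\|T\|_{\A}$ and $\|C(T)\|_{B(\A)}\le 2\|T\|_{\A}$ from \eqref{13221607}. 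These same bounds verify the absolute uniform convergence asserted in each statement, in the sense of Remark \ref{02102006}.

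For statement \eqref{F05051540c} I would substitute the second form in \eqref{03071049}, namely $u_{n}^{[1]}(T)=n\mathcal{L}(T)^{n-1}-\sum_{k=2}^{n}\mathcal{L}(T)^{n-k}C(T^{k-1})$, split off the leading term, and swap the two summations so that for each fixed $k\ge 2$ the index $n$ runs from $k$ to $\infty$; this yields \eqref{07071636} at once. The norm estimate uses $\|C(T^{k-1})\|_{B(\A)}\le 2\|T\|^{k-1}$, whence on $B_{s}(\ze)$ the double sum is dominated by $2\sum_{n\ge 2}(n-1)|\alpha_{n}|s^{n-1}$, finite for $0<s<R$ because the differentiated series has the same radius $R$. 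Statement \eqref{E05051540c} is entirely analogous but starts from the fourth form in \eqref{03071049}; writing $p$ for the summation variable $s$ there and interchanging (for fixed $p\ge 0$ the index $n$ runs from $p+2$) produces \eqref{07071547}, and the norm bound collapses through $\sum_{p=0}^{n-2}(n-p-1)=\tfrac{n(n-1)}{2}$ to the majorant $\sum_{n\ge 2}n(n-1)|\alpha_{n}|s^{n-1}$, again finite for $s<R$.

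Statement \eqref{C05051540c} is the delicate case and where I expect the real work. Here I would insert the third form $u_{n}^{[1]}(T)=\sum_{p=1}^{n}\binom{n}{p}\RM(T)^{n-p}C(T)^{p-1}$ and interchange so that for fixed $p\ge 1$ the index $n$ runs from $p$ to $\infty$, giving $g^{[1]}(T)=\sum_{p=1}^{\infty}(\sum_{n=p}^{\infty}\alpha_{n}\binom{n}{p}\RM(T)^{n-p})C(T)^{p-1}$. The key identification is that the inner operator series is exactly $\frac{1}{p!}g^{(p)}(\RM(T))$: termwise differentiation gives $g^{(p)}(\lambda)=p!\sum_{n\ge p}\alpha_{n}\binom{n}{p}\lambda^{n-p}$, a series of radius $R$, so by Definition \ref{03071132} applied in $B(\A)$ the substitution of $\RM(T)$, which satisfies $\|\RM(T)\|_{B(\A)}\le\|T\|<R$, is legitimate. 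The main obstacle, and the source of the restriction to $B_{R/3}(\ze)$, is the convergence of the \emph{outer} series: bounding the $p$-th term by $\sum_{n\ge p}|\alpha_{n}|\binom{n}{p}s^{n-p}(2s)^{p-1}$ and interchanging the sums, the inner binomial sum evaluates by the binomial theorem to $\sum_{p=1}^{n}\binom{n}{p}s^{n-p}(2s)^{p-1}=\frac{(3s)^{n}-s^{n}}{2s}$, so the whole expression is majorized by $\sum_{n\ge 1}|\alpha_{n}|\frac{(3s)^{n}-s^{n}}{2s}$, which is finite precisely when $3s<R$. This forces $0<s<\tfrac{R}{3}$, simultaneously supplies the claimed absolute uniform convergence on $B_{s}(\ze)$, and legitimizes the reordering that produced \eqref{16081549}.
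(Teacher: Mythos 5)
Your proposal is correct, and for statements \eqref{E05051540c} and \eqref{C05051540c} it is essentially the paper's own proof: both insert the fourth, respectively third, equality of \eqref{03071049} into the term-by-term identity $g^{[1]}=\sum_{n\geq 1}\alpha_{n}u_{n}^{[1]}$ of Lemma \ref{05051540}, dominate the double series in the norm of $B(\A)$ via \eqref{13221607}, and reorder; your majorants $\sum_{n\geq 2}n(n-1)|\alpha_{n}|s^{n-1}$ and $s^{-1}\sum_{n\geq 1}|\alpha_{n}|\left((3s)^{n}-s^{n}\right)/2$ are, up to an inessential constant, exactly the paper's \eqref{15090956} and its estimate of $\mathfrak{A}$, including the binomial-theorem step that forces $3s<R$; your explicit identification of the inner series with $\frac{1}{p!}g^{(p)}(\RM(T))$ via $g^{(p)}(\lambda)=p!\sum_{n\geq p}\alpha_{n}\binom{n}{p}\lambda^{n-p}$ and $\|\RM(T)\|_{B(\A)}\leq\|T\|_{\A}<R$ is left tacit in the paper and is a welcome addition. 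Where you genuinely diverge is statement \eqref{F05051540c}: you substitute the second equality of \eqref{03071049} directly and justify the single interchange in $n$ and $k$ by the bound $\|C(T^{k-1})\|_{B(\A)}\leq 2\|T\|_{\A}^{k-1}$, getting the sharper majorant $2\sum_{n\geq 2}(n-1)|\alpha_{n}|s^{n-1}$; the paper instead re-expands $C(T^{k-1})$ through Corollary \ref{06051405} into the triple sum \eqref{New07071713}, reorders it in \eqref{New07071728}, and then invokes \eqref{11071436} to match the triple sum against the already-established expression \eqref{02082601} of statement $(1)$, at the cost of the cruder bound $\sum_{n}n(n-1)|\alpha_{n}|r^{n-1}$ but with the side benefit of exhibiting directly that the subtracted series in \eqref{07071547} and \eqref{07071636} coincide, i.e. the mutual consistency of the two boxed formulas. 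Your route is shorter and self-contained given Lemma \ref{05051540a}; the paper's buys the cross-check between statements $(1)$ and $(2)$. One point you leave implicit (the paper flags it only through the remark that $\mathcal{L}(C(T^{k-1}))\in B(B(\A))$): factoring the fixed operators $\RM(T)^{p}$, $C(T)$ or $C(T^{k-1})$ out of a norm-convergent series uses the continuity of composition by a fixed element of $B(\A)$; one sentence would close that, and with it your argument is complete.
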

\begin{remark}
\label{17081012}
If
$R/3\leq r<R$
then
in general the series in
\eqref{16081549}
may not converge, see for a 
counterexample
the
\cite{BurenkovDiff}.
\end{remark}
\begin{remark}
\label{17081112}
By using Def. \ref{03071132}
we have for all $T\in B_{R}(\ze)$
$$
\frac{1}{p!}
g^{(p)}(\RM(T))=
\sum_{n=p}^{\infty}
\dbinom{n}{p}\alpha_{n}\RM(T)^{n-p}\in B(\A).
$$
Clearly both 
\eqref{07071547}
and
\eqref{07071636}
immediately
imply
that
if
$T,h\in\A$
are such that
$[T,h]=\ze$,
then
$$
g^{[1]}(T)(h)
=
\sum_{n=1}^{\infty}
n\alpha_{n}hT^{n-1}.
$$
\end{remark}
\begin{proof}
[Proof of Theorem \ref{05051540c}]
By Lemmes \ref{05051540a}
and 
\ref{05051540}
\begin{alignat*}{1}
\label{15090938}
g^{[1]}(T)
&
=
\sum_{n=1}^{\infty}
\alpha_{n}u_{n}^{[1]}(T)
\\
&
=
\alpha_{1}\un
+
\sum_{n=2}^{\infty}
\alpha_{n}
\left(
n
\mathcal{L}(T)^{n-1}
-
\sum_{s=0}^{n-2}
(n-s-1)
\mathcal{L}(T)^{n-(s+2)}
\RM(T)^{s}
C(T)
\right).
\end{alignat*}
By \eqref{13221607}
for all $0<r<R$ 
$$
\sum_{n=2}^{\infty}
\sup_ {T\in B_{r}(\ze)}
\|
\alpha_{n}
n
\mathcal{L}(T)^{n-1}
\|_{B(\A)}
\leq
\sum_{n=2}^{\infty}
n|\alpha_{n}|r^{n-1}
<\infty
$$
and
\begin{gather}
\sum_{n=2}^{\infty}
\sum_{s=0}^{n-2}
\sup_{T\in B_{r}(\ze)}
\|
\alpha_{n}
(n-s-1)
\mathcal{L}(T)^{n-(s+2)}
\RM(T)^{s}
C(T)
\|_{B(\A)}
\leq
\notag
\\
\sum_{n=2}^{\infty}
|\alpha_{n}|
\sum_{s=0}^{n-2}
(n-s-1)
r^{n-2}
(2r)
=
\sum_{n=2}^{\infty}
|\alpha_{n}|
(n-1)nr^{n-1}
<\infty.
\label{15090956}
\end{gather}
\par
\hspace{12pt}
Therefore 
\begin{equation}
\label{02082601}
g^{[1]}(T)
=
\sum_{n=1}^{\infty}
\alpha_{n}
n
\mathcal{L}(T)^{n-1}
-
\sum_{n=2}^{\infty}
\alpha_{n}
\sum_{s=0}^{n-2}
(n-s-1)
\mathcal{L}(T)^{n-(s+2)}
\RM(T)^{s}
C(T).
\end{equation}
Here
each 
series 
converges
absolutely
uniformly
on
$B_{r}(\ze)$
for all $0<r<R$.
Inequality
\eqref{15090956}
also implies that
\begin{alignat*}{1}
&
\sum_{n=2}^{\infty}
\alpha_{n}
\sum_{s=0}^{n-2}
(n-s-1)
\mathcal{L}(T)^{n-(s+2)}
\RM(T)^{s}
C(T)
\\
&=
\sum_{s=0}^{\infty}
\sum_{n=s+2}^{\infty}
(n-s-1)
\alpha_{n}
\mathcal{L}(T)^{n-(2+s)}
\RM(T)^{s}
C(T)\\
&=
\left\{
\sum_{s=0}^{\infty}
\left\{
\sum_{n=s+2}^{\infty}
(n-s-1)
\alpha_{n}
\mathcal{L}(T)^{n-(2+s)}
\right\}
\RM(T)^{s}
\right\}
C(T).
\end{alignat*}
Here each series
converging
absolutely
uniformly
on
$B_{r}(\ze)$
for all $0<r<R$
and
statement $(1)$ follows.
Using \eqref{13221607}
we can estimate
\par
\hspace{12pt}
\begin{alignat}{2}
\label{New07071713}
&
\sum_{n=2}^{\infty}
\sum_{k=2}^{n}
\sup_{T\in B_{r}(\ze)}
\left
\|
\sum_{s=0}^{k-2}
\alpha_{n}
\mathcal{L}(T)^{n-k}
\RM(T)^{s}
\mathcal{L}(T)^{k-(2+s)}
C(T)
\right
\|_{B(\A)}
\notag\\
&
=
\sum_{n=2}^{\infty}
\sum_{k=2}^{n}
\sup_{T\in B_{r}(\ze)}
\left
\|
\sum_{s=0}^{k-2}
\alpha_{n}
\RM(T)^{s}
\mathcal{L}(T)^{n-(2+s)}
C(T)
\right
\|_{B(\A)}
\notag\\
&
\leq
\sum_{n=2}^{\infty}
\sum_{k=2}^{n}
\sum_{s=0}^{k-2}
|\alpha_{n}|
\sup_{T\in B_{r}(\ze)}
\left\|
\RM(T)
\right
\|_{B(\A)}^{s}
\left\|
\mathcal{L}(T)
\right
\|_{B(\A)}^{n-(2+s)}
\left\|
C(T)
\right
\|_{B(\A)}
\notag\\
&
\leq
2
\sum_{n=2}^{\infty}
\sum_{k=2}^{n}
\sum_{s=0}^{k-2}
|\alpha_{n}|
\sup_{T\in B_{r}(\ze)}
\left\|
T
\right
\|_{\A}^{n-1}
\notag\\
&
=
\sum_{n=2}^{\infty}
n(n-1)
|\alpha_{n}|
\sup_{T\in B_{r}(\ze)}
\left\|
T
\right
\|_{\A}^{n-1}
\notag\\
&
=
\sum_{n=2}^{\infty}
n(n-1)
|\alpha_{n}|
r^{n-1}
<\infty,
\end{alignat}
and therefore
\begin{alignat}{2}
\label{New07071728}
&
\sum_{n=2}^{\infty}
\sum_{k=2}^{n}
\sum_{s=0}^{k-2}
\alpha_{n}
\mathcal{L}(T)^{n-k}
\RM(T)^{s}
\mathcal{L}(T)^{k-(2+s)}
C(T)
\notag\\
&
=
\sum_{k=2}^{\infty}
\sum_{n=k}^{\infty}
\sum_{s=0}^{k-2}
\alpha_{n}
\mathcal{L}(T)^{n-k}
\RM(T)^{s}
\mathcal{L}(T)^{k-(2+s)}
C(T)
\notag\\
&
=
\sum_{k=2}^{\infty}
\sum_{n=k}^{\infty}
\alpha_{n}
\mathcal{L}(T)^{n-k}
\sum_{s=0}^{k-2}
\RM(T)^{s}
\mathcal{L}(T)^{k-(2+s)}
C(T)
\notag\\
&
=
\sum_{k=2}^{\infty}
\left\{
\sum_{n=k}^{\infty}
\alpha_{n}
\mathcal{L}(T)^{n-k}
\right\}
C(T^{k-1}).
\end{alignat}
\par
\hspace{12pt}
All 
the
series
uniformly converge
for
$
T\in B_{r}(\ze)
$.
Here in the last equality we used 
Corollary
\ref{06051405}
and
the 
fact that
$
\mathcal{L}(C(T^{k-1}))
\in
B(B(\A))
$.
Moreover
by 
\eqref{11071436}
\begin{equation*}
\sum_{n=2}^{\infty}
\sum_{k=2}^{n}
\sum_{s=0}^{k-2}
\alpha_{n}
\mathcal{L}(T)^{n-k}
\RM(T)^{s}
\mathcal{L}(T)^{k-(2+s)}
C(T)
=
\sum_{n=2}^{\infty}
\sum_{s=0}^{n-2}
(n-s-1)
\alpha_{n}
\mathcal{L}(T)^{n-(2+s)}
\RM(T)^{s}
C(T)
\end{equation*}
hence
by
\eqref{New07071728}
and
\eqref{02082601}
we obtain
statement 
$(2)$.
\par
\hspace{12pt}
Finally
we have for all
$r<\frac{R}{3}$
\begin{alignat*}{2}
\mathfrak{A}
&
\doteqdot
\sum_{n=1}^{\infty}
\sum_{p=1}^{n}
\sup_{T\in B_{r}(\ze)}
\left\|
\alpha_{n}
\dbinom{n}{p}
\RM(T)^{n-p}
C(T)^{p-1}
\right\|_{B(\A)}
\notag\\
&
\leq
\sum_{n=1}^{\infty}
\sum_{p=1}^{n}
\dbinom{n}{p}
|\alpha_{n}|
\sup_{T\in B_{r}(\ze)}
\|
\RM(T)
\|_{B(\A)}^{n-p}
\|
C(T)
\|_{B(\A)}^{p-1}
\notag\\
&
\leq
\sum_{n=1}^{\infty}
\sum_{p=1}^{n}
\dbinom{n}{p}
|\alpha_{n}|
\sup_{T\in B_{r}(\ze)}
\|
T
\|_{\A}^{n-p}
2^{p-1}
\|
T
\|_{\A}^{p-1}
\notag\\
&
=
\sum_{n=1}^{\infty}
\sum_{p=1}^{n}
\dbinom{n}{p}
|\alpha_{n}|
r^{n-1}
2^{p-1}.
\end{alignat*}
Hence
\begin{alignat*}{2}
\mathfrak{A}
&
\leq
\sum_{n=1}^{\infty}
|\alpha_{n}|
r^{n-1}
\sum_{p=1}^{n}
\dbinom{n}{p}
2^{p-1}
\notag\\
&
<
\sum_{n=1}^{\infty}
|\alpha_{n}|
r^{n-1}
\sum_{p=0}^{n}
\dbinom{n}{p}
2^{p}
\notag\\
&
=
\sum_{n=1}^{\infty}
|\alpha_{n}|
r^{n-1}
3^{n}
\notag\\
&
=
r^{-1}
\sum_{n=1}^{\infty}
|\alpha_{n}|
(3r)^{n}
<
\infty.
\end{alignat*}
\par
\hspace{12pt}
Thus
by 
the third 
equality in Lemma \ref{05051540a}
and
Lemma \ref{05051540}
we obtain statement $(3)$.
\end{proof}
The 
previous
Theorem 
\ref{05051540c}
is the 
main 
result
of the present work.
Let
$\A$
be
a unitary
$\K-$Banach algebra
and
$\sum_{n=0}^{\infty}\alpha_{n}\lambda^{n}$
a series at coefficients in $\K$
having
radius of convergence
$R>0$.
We
give
for the first
time the Fr\'{e}chet differential 
$g^{[1]}$
of the $\A-$valued
function
$g(T)=
\sum_{n=0}^{\infty}\alpha_{n}T^{n}$,
in 
a
\emph{
$C(T)-$
depending absolutely
uniformly convergent
series
on $B_{r}(\ze)$,
for all $0<r<R$,
in statement
\eqref{E05051540c};
and
in a
$C(T^{k})-$
depending absolutely
uniformly convergent
series
on $B_{r}(\ze)$,
for all $0<r<R$
and
with $k\geq 1$,
in statement
\eqref{F05051540c}}.
This 
allows
us
to give immediately 
a simplified 
formula 
for
the
value
$
g^{[1]}(T)(h)
$
in case of
the commutativity 
$[T,h]=\ze$,
with
$T\in B_{R}(\ze)$
and
$h\in\A$,
(see Remark \ref{17081112}).
\par\hspace{12pt}
Finally
we
give a different proof
respect
to 
\cite{Rudin1}
and in such a way generalizing
that
in 
\cite{BurenkovDiff},
of the  
known 
formula
in statement $(3)$,
in case
$0<r<\frac{R}{3}$,
see Remark \ref{18081019}
and Remark
\ref{08051500}.
\par
\begin{remark}
\label{15091737}
We note that the formula 
\eqref{07071547}
explicitly contains $C(T)$
as a factor, 
formula \eqref{07071636}
gives an expansion in terms
of $C(T^{k})$ 
and finally formula 
\eqref{16081549}
gives an expansion in terms
of $C(T)^{k}$.
\end{remark}
\begin{remark}
\label{18081019}
For all
$T$
such that
$
\|T\|<\frac{R}{3}
$
and for all
$h\in\A$
we have 
\begin{equation}
\label{D05051540c}
g^{[1]}(T)(h)
=
\sum_{p=1}^{\infty}
\frac{1}{p!}
g^{(p)}(T)
C(T)^{p-1}
(h).
\end{equation}
Here
the
series
is
uniformly convergent
for
$
(T,h)
\in
B_{r}(\ze)
\times
B_{L}(\ze)
$
for
all
$
0<r<\frac{R}{3}
$
and
$L>0$,
i.e.
$$
\sum_{p=1}^{\infty}
\frac{1}{p!}
\sup_{(T,h)\in B_{r}(\ze)\times B_{L}(\ze)}
\|g^{(p)}(T)
C(T)^{p-1}
(h)
\|_{\A}
<\infty.
$$
\end{remark}
\begin{corollary}
[Fr\'{e}chet 
differential
of a power series
of differentiable
functions
defined on
an open set 
of
a $\K-$Banach Space
and at values
in a $\K-$Banach algebra $\A$]
\label{08051458}
Let $\A$ be a unitary Banach algebra, 
and
$\{\alpha_{n}\}_{n\in\N}\subset\K$
be
such that
the
radius of convergence
of
the series 
$
g(\lambda)
\doteqdot
\sum_{n=0}^{\infty}
\alpha_{n}
\lambda^{n}
$
is
$R>0$
and
$0<r<R$.
Finally
let 
$X$ 
be a Banach space over $\K$,
$D\subseteq X$ an open set in $X$
and 
$\mc{T}:D\to\A$ 
a Fr\'{e}chet differentiable mapping
such
that
$
\mc{T}(D)\subseteq B_{r}(\ze)
$
or alternatively 
$
D
$
is convex
and
bounded
and
$
\sup_{x\in D}\|\mc{T}^{[1]}(x)\|_{B(X,\A)}
<\infty
$.
If we set
$
\w{r}
\doteqdot
\sup_{x\in D}
\|
\mc{T}(x)
\|_{\A}
$,
then
\begin{enumerate}
\item
$
\w{r}
<
\infty
$
and if
$\w{r}<R$
then
$$
g\circ\mc{T}
=
\sum_{n=0}^{\infty}
\alpha_{n}\mc{T}^{n}.
$$
Here
the series
is
uniformly convergent
on
$D$,
while
$\mc{T}^{n}:
D\ni x
\mapsto
\mc{T}(x)^{n}$.
\label{A08051458}
\item
If
$0<\w{r}<R$
then
the function 
$
g\circ\mc{T}
$
is 
Fr\'{e}chet differentiable and
\begin{equation}
\label{08051520}
\left[g\circ\mc{T}\right]^{[1]}(x)
=
\sum_{n=0}^{\infty}
\alpha_{n}
u_{n}^{[1]}(\mc{T}(x))
\mc{T}^{[1]}(x),
\quad
\forall x\in D.
\end{equation}
Here
the series
converges
in
$B(X,\A)$.
\label{B08051458}
Moreover
\begin{enumerate}
\item
If
$\mc{T}^{[1]}:D\to B(X,\A)$
is
continuous 
then
the function
$
\left[g\circ\mc{T}\right]^{[1]}:
D
\to
B(X,\A)
$,
is also
continuous.
\label{1B08051458}
\item
If
$\sup_{x\in D}\|\mc{T}^{[1]}(x)\|_{B(X,\A)}
<\infty$,
then
the series in 
\eqref{08051520}
absolutely
uniformly converges
on
$D$.
\label{2B08051458}
\end{enumerate}
\end{enumerate}
\end{corollary}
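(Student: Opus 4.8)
The plan is to reduce the corollary to the single-variable results Lemma \ref{05051540a} and Lemma \ref{05051540}, combined with two standard facts of Frechet calculus from Ch. $8$ of \cite{Dieud1}: the chain rule and the mean value inequality. First I would establish $\w{s}<\infty$. In the first alternative the hypothesis $T(D)\subseteq B_{s}(\ze)$ gives $\w{s}\leq s<\infty$ immediately. In the second alternative, since $D$ is convex I would apply the mean value inequality: fixing $x_{0}\in D$, for every $x\in D$ one has $\|T(x)-T(x_{0})\|_{\A}\leq\big(\sup_{z\in D}\|T^{[1]}(z)\|_{B(X,\A)}\big)\|x-x_{0}\|_{X}$, and the boundedness of $D$ bounds the right-hand side, so $\w{s}<\infty$. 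For statement \eqref{A08051458}, assume $\w{s}<R$; then $\|T(x)\|_{\A}\leq\w{s}<R$ for all $x\in D$, so statement \eqref{A05051540} of Lemma \ref{05051540} gives $g(T(x))=\sum_{n=0}^{\infty}\alpha_{n}T(x)^{n}$ pointwise, and since $\sup_{x\in D}\|\alpha_{n}T(x)^{n}\|_{\A}\leq|\alpha_{n}|\w{s}^{\,n}$ with $\sum_{n}|\alpha_{n}|\w{s}^{\,n}<\infty$, the Weierstrass $M$-test yields absolute uniform convergence on $D$ to $g\circ T$.

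For statement \eqref{B08051458}, assume $0<\w{s}<R$, so that each value $T(x)$ lies in $B_{R}(\ze)$, where $g$ is Frechet differentiable by statement \eqref{B05051540} of Lemma \ref{05051540}. Applying the chain rule I obtain that $g\circ T$ is Frechet differentiable on $D$ with $[g\circ T]^{[1]}(x)=g^{[1]}(T(x))\circ T^{[1]}(x)\in B(X,\A)$. I would then substitute the expansion $g^{[1]}(T(x))=\sum_{n=1}^{\infty}\alpha_{n}u_{n}^{[1]}(T(x))$ from \eqref{02101544}, and since right-composition $U\mapsto U\circ T^{[1]}(x)$ is a bounded, hence continuous, linear map $B(\A)\to B(X,\A)$, I can push the series through the composition to get $[g\circ T]^{[1]}(x)=\sum_{n=1}^{\infty}\alpha_{n}u_{n}^{[1]}(T(x))\,T^{[1]}(x)$; the $n=0$ term is $\ze$ because $u_{0}^{[1]}=\ze$, so the sum may begin at $0$ as in \eqref{08051520}. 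Its convergence in $B(X,\A)$ is exactly the image of the $B(\A)$-convergent series under that continuous composition map.

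The two refinements follow from this factored form. For \eqref{1B08051458}, the map $x\mapsto g^{[1]}(T(x))$ is continuous, being the composite of the continuous $T$ with the continuous $g^{[1]}$ guaranteed by statement \eqref{B05051540} of Lemma \ref{05051540}; $T^{[1]}$ is continuous by assumption; and the composition $(U,V)\mapsto U\circ V$ is continuous on $B(\A)\times B(X,\A)$ since $\|U\circ V\|\leq\|U\|\|V\|$, whence $[g\circ T]^{[1]}$ is continuous. For \eqref{2B08051458}, putting $M\doteqdot\sup_{x\in D}\|T^{[1]}(x)\|_{B(X,\A)}<\infty$ and using the bound \eqref{12441607}, I get $\sup_{x\in D}\|\alpha_{n}u_{n}^{[1]}(T(x))\circ T^{[1]}(x)\|_{B(X,\A)}\leq|\alpha_{n}|\,n\,\w{s}^{\,n-1}M$, and $\sum_{n}n|\alpha_{n}|\w{s}^{\,n-1}<\infty$ because the differentiated series $\sum_{n}n\alpha_{n}\lambda^{n-1}$ has the same radius of convergence $R>\w{s}$; this gives absolute uniform convergence on $D$.

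The hard part will not be any single estimate but the two places where a limit is interchanged with another operation: deriving $\w{s}<\infty$ in the convex-bounded case, which genuinely needs the mean value inequality rather than a termwise bound, and the passage of the infinite series through the composition by the fixed operator $T^{[1]}(x)$, which is legitimate only because bounded linear maps are continuous. Once these are in place, the remainder is a pointwise application of the already established Lemma \ref{05051540a} and Lemma \ref{05051540}.
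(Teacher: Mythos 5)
Your proposal is correct and follows essentially the same route as the paper: the mean value inequality on the convex bounded $D$ to get $\w{s}<\infty$, statement \eqref{A05051540} of Lemma \ref{05051540} plus a Weierstrass-type estimate for statement \eqref{A08051458}, the chain rule combined with the continuity of the bilinear composition map $B(\A)\times B(X,\A)\to B(X,\A)$ to factor $\left[g\circ T\right]^{[1]}(x)=g^{[1]}(T(x))\circ T^{[1]}(x)$ and push the series through, and the bound \eqref{12441607} for statement \eqref{2B08051458}. The only cosmetic difference is that you invoke the mean value inequality in its direct Lipschitz form while the paper uses the variant with the reference term $T^{[1]}(x_{0})(b-a)$, but the substance is identical.
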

\proof
Let
us
consider
the case
in which
$D$ 
is
convex and bounded,
and
$
\sup_{x\in D}
\|
\mc{T}^{[1]}(x)
\|_{B(X,\A)}
<\infty$.
Let
$a,b\in D$ 
and 
$S_{a,b}$ 
the segment jointing $a,b$.
$D$ is
convex so
$S_{a,b}\subset D$.
By an application of the Mean Value Theorem, 
see 
Theorem $8.6.2.$ of \cite{Dieud1}, 
we have for any 
$x_{0}\in D$
$$
\|
\mc{T}(b)-\mc{T}(a)-\mc{T}^{[1]}(x_{0})(b-a)
\|_{\A}
\leq
\|b-a\|_{X}
\cdot
\sup_{x\in S_{a,b}}
\|
\mc{T}^{[1]}(x)
-
\mc{T}^{[1]}(x_{0})
\|_{B(X,\A)}.
$$
Thus
by 
$
\|A\|-\|B\|
\leq
\|A-B\|
$
in any normed space,
we have
fixed $b\in D$ and $x_{0}\in D$,
that for all
$a\in D$
\begin{alignat}{1}
\label{08061825}
&
\sup_{a\in D}
\|
\mc{T}(a)
\|_{\A}
\\
&
\leq
\sup_{a\in D}
\|
\mc{T}(b)-\mc{T}^{[1]}(x_{0})(b-a)
\|_{\A}
+
\sup_{a\in D}
\|b-a\|_{X}
\cdot
\sup_{a\in D}
\sup_{x\in S_{a,b}}
\|
\mc{T}^{[1]}(x)
-
\mc{T}^{[1]}(x_{0})
\|_{B(X,\A)}
\notag
\\
&
\leq
\|
\mc{T}(b)
\|_{\A}
+
\|
\mc{T}^{[1]}(x_{0})
\|_{B(X,\A)}
\sup_{a\in D}
\|
b-a
\|_{X}
+
\sup_{a\in D}
\|b-a\|_{X}
\cdot
\sup_{x\in D}
\|
\mc{T}^{[1]}(x)
-
\mc{T}^{[1]}(x_{0})
\|_{B(X,\A)}
\notag
\\
&
\leq
\|
\mc{T}(b)
\|_{\A}
+
\sup_{a\in D}
\|b-a\|_{X}
\cdot
\left(
2
\|
\mc{T}^{[1]}(x_{0})
\|_{B(X,\A)}
+
\sup_{x\in D}
\|
\mc{T}^{[1]}(x)
\|_{B(X,\A)}
\right)
<
\infty.
\notag
\end{alignat}
Here $D$
is 
considered 
bounded 
and
$
\sup_{x\in D}
\|
\mc{T}^{[1]}(x)
\|_{B(X,\A)}
<\infty
$
by hypothesis.
So by 
\eqref{08061825}
$\w{r}<\infty$
which is
the
first 
part of statement 
\eqref{A08051458}.
\par
\hspace{12pt}
Let
$D\subseteq X$
be
the
open set 
of which 
in the hypotheses.
By
$\w{r}<\infty$
we can assume
that
$0<\w{r}<R$,
then
the second part of
statement 
\eqref{A08051458}
follows
by 
statement
\eqref{A05051540}
of
Lemma
\ref{05051540}.
\par
\hspace{12pt}
In the sequel
of the proof we assume
that
$0<\w{r}<R$.
By 
statement 
\eqref{B05051540}
of  
Lemma
\ref{05051540} 
and
by
the Chain Theorem, 
see 
$8.2.1.$ of the \cite{Dieud1},
$
g
\circ
\mc{T}
$
is Fr\'{e}chet differentiable and
its differential map is
\begin{equation}
\label{08051647}
\left[g\circ\mc{T}\right]^{[1]}:
D
\ni
x
\mapsto
g^{[1]}(\mc{T}(x))
\circ
\mc{T}^{[1]}(x)
=
\left
\{
\sum_{n=0}^{\infty}
\alpha_{n}
u_{n}^{[1]}(\mc{T}(x))
\right
\}
\circ
\mc{T}^{[1]}(x)
\in
B(X,\A),
\end{equation}
here
it was used the fact that
uniform
convergence
implies
puntual convergence.
By 
statement 
\eqref{B05051540}
of  
Lemma
\ref{05051540} 
and 
$\w{r}<R$
the previous series 
converges
in $B(\A)$,
moreover
$
\mf{b}:
B(\A)
\times
B(X,\A)
\ni
(\phi,\psi)
\mapsto
\phi\circ\psi
\in
B(X,\A)$
is a
bilinear 
and
continuous
map
i.e.
\begin{equation}
\label{08071600}
\mf{b}
\in
B_{2}(B(\A)\times B(X,\A);B(X,\A)),
\end{equation}
since
$ 
\|\phi\circ\psi\|_{B(X,\A)}
\leq
\|\phi\|_{B(\A)}
\cdot
\|\psi\|_{B(X,\A)}
$.
Thus
\eqref{08051647}
implies
\eqref{08051520}.
\par
\hspace{12pt}
Set 
$$
\Gamma:D\ni
x
\mapsto
(
g^{[1]}\circ\mc{T}(x),
\mc{T}^{[1]}(x)
)
\in
B(\A)\times B(X,\A).
$$
According to
\eqref{08051647}
\begin{equation}
\label{10051428}
\left[g\circ\mc{T}\right]^{[1]}
=
\mf{b}
\circ
\Gamma.
\end{equation}
$\mc{T}^{[1]}$ is continuous by hypothesis,
and
$
g^{[1]}
$
is continuous
by statement 
\eqref{B05051540}
of Lemma
\ref{05051540},
while
$\mc{T}$ is continuous 
being differentiable by hypothesis,
so
$g^{[1]}\circ\mc{T}$
is continuous.
Therefore
by
Proposition
$1$, $\S 4.1.$, Ch $1$, of the 
\cite{BourGT}
the map
$\Gamma$ is continuous.
Thus
by
\eqref{10051428} 
and
\eqref{08071600}
$\left[g\circ\mc{T}\right]^{[1]}$
is continuous
and
statement
\eqref{1B08051458}
follows.
\par
\hspace{12pt}
By
\eqref{12441607}
\begin{alignat*}{1}
&
\sum_{n=0}^{\infty}
\sup_{x\in D}
\|
\alpha_{n}
u_{n}^{[1]}(\mc{T}(x))
\circ
\mc{T}^{[1]}(x)
\|_{B(X,\A)}
\\
&
\leq
\sum_{n=0}^{\infty}
\sup_{x\in D}
|\alpha_{n}|
\|
u_{n}^{[1]}(\mc{T}(x))
\|_{B(\A)}
\cdot
\|
\mc{T}^{[1]}(x)
\|_{B(X,\A)}
\\
&
\leq
\sum_{n=0}^{\infty}
\sup_{x\in D}
|\alpha_{n}|
n
\|
\mc{T}(x)
\|_{\A}^{n-1}
\cdot
\|
\mc{T}^{[1]}(x)
\|_{B(X,\A)}
\\
&
\leq
\sum_{n=0}^{\infty}
\sup_{x\in D}
|\alpha_{n}|
n
\|
\mc{T}(x)
\|_{\A}
^{n-1}
\cdot
\sup_{x\in D}
\|
\mc{T}^{[1]}(x)
\|_{B(X,\A)}
\\
&
\leq
M
\sum_{n=0}^{\infty}
|\alpha_{n}|
n
\w{r}^{n-1}
<
\infty,
\end{alignat*}
where
$
M
\doteqdot
\sup_{x\in D}
\|
\mc{T}^{[1]}(x)
\|_{B(X,\A)}
$
and
statement
\eqref{2B08051458}
follows.
\endproof
\begin{remark}
\label{G08051458}
By
\eqref{08051647},
statement
$(3)$ 
of Theorem \ref{05051540c}
and
\eqref{08071600},
if
$
0<\w{r}<\frac{R}{3}
$,
we have for all
$x\in D$ 
\begin{equation}
\label{03071955}
\left[
g\circ\mc{T}
\right]^{[1]}(x)
=
\sum_{p=1}^{\infty}
\frac{1}{p!}
g^{(p)}(\RM(\mc{T}(x)))
C(\mc{T}(x))^{p-1}
\mc{T}^{[1]}(x).
\end{equation}
In addition if
$\sup_{x\in D}\|\mc{T}^{[1]}(x)\|_{B(X,\A)}
<\infty$,
then
the series in the 
\eqref{03071955}
is 
absolutely
uniformly convergent
on
$D$.
If
$
0<\w{r}<\frac{R}{3}
$
by
\eqref{03071955}
we have for all
$h\in X$
\begin{equation}
\label{06071746}
\left[
g\circ\mc{T}
\right]^{[1]}
(x)(h)
=
\sum_{p=1}^{\infty}
\frac{1}{p!}
g^{(p)}(\mc{T}(x))
C(\mc{T}(x))^{p-1}
(\mc{T}^{[1]}(x)(h)).
\end{equation}
In addition
if
$
\sup_{x\in D}\|\mc{T}^{[1]}(x)\|_{B(X,\A)}
<\infty
$,
then
the series in the 
\eqref{06071746}
is 
absolutely
uniformly convergent
for
$(x,h)\in D\times B_{L}(\ze)$,
for all
$L>0$,
i.e.
$$
\sum_{p=1}^{\infty}
\frac{1}{p!}
\sup_{(x,h)\in D\times B_{L}(\ze)}
\|
g^{(p)}(\mc{T}(x))
C(\mc{T}(x))^{p-1}
(\mc{T}^{[1]}(x)(h))
\|_{\A}<\infty.
$$
\end{remark}
\begin{remark}
\label{14081324}
In a similar way 
as in
the proof
of statement 
\ref{2B08051458}
of Corollary
\ref{08051458}
we have
\begin{enumerate}
\item
if
$
0<\w{r}<R
$
\begin{equation}
\label{07072013II}
\begin{aligned}
&
\left[g\circ\mc{T}\right]^{[1]}
(x)(h)
=
\sum_{n=1}^{\infty}
n\alpha_{n}
\mc{T}^{[1]}(x)(h)
\mc{T}(x)^{n-1}+
\\
&
\sum_{n=2}^{\infty}
\sum_{p=0}^{n-2}
(n-p-1)
\alpha_{n}
\mc{T}(x)^{p}
[\mc{T}(x),\mc{T}^{[1]}(x)(h)]
\mc{T}(x)^{n-(2+p)}
\\
&
=
\sum_{n=1}^{\infty}
n\alpha_{n}
\mc{T}^{[1]}(x)(h)
\mc{T}(x)^{n-1}
+
\\
&
\sum_{p=0}^{\infty}
\sum_{n=p+2}^{\infty}
(n-p-1)
\alpha_{n}
\mc{T}(x)^{p}
[\mc{T}(x),\mc{T}^{[1]}(x)(h)]
\mc{T}(x)^{n-(2+p)}.
\end{aligned}
\end{equation}
If in addition
$\sup_{x\in D}\|\mc{T}^{[1]}(x)\|_{B(X,\A)}
<\infty$,
then 
all
the
series
in
\eqref{07072013II}
are
absolutely
uniformly convergent 
for 
$(x,h)\in D\times B_{L}(\ze)$,
for all
$L>0$.
\label{H08051458}
\item
If
$0<\w{r}<R$
we have
\begin{equation}
\label{07071636II}
\begin{aligned}
&
\left[g\circ\mc{T}\right]^{[1]}
(x)
\\
&
=
\sum_{n=1}^{\infty}
n\alpha_{n}
\mathcal{L}(\mc{T}(x))^{n-1}
\mc{T}^{[1]}(x)
-
\sum_{k=2}^{\infty}
\left\{
\sum_{n=k}^{\infty}
\alpha_{n}
\mathcal{L}(\mc{T}(x))^{n-k}
\right\}
C(\mc{T}(x)^{k-1})
\mc{T}^{[1]}(x).
\end{aligned}
\end{equation}
If in addition 
$\sup_{x\in D}\|\mc{T}^{[1]}(x)\|_{B(X,\A)}<\infty$,
then 
all
the
series
in 
\eqref{07071636II}
are
absolutely
uniformly convergent 
on
$D$.
\label{I08051458}
\end{enumerate}
\end{remark}
\begin{definition}
\label{29081058}
Let 
$\lr{G}{\|\cdot\|_{G}}$ 
be 
a $\C-$Banach space, 
then 
we denote 
by
$G_{\R}$
the vector space 
$G$
over $\R$
whose 
operation of 
summation
is the same 
of that of the 
$\C-$vector space
$G$, 
and whose 
multiplication
by
scalars 
is the 
restriction 
to
$\R\times G$
of
the
multiplication
by
scalars 
on
$\C\times G$,
finally we set
$
\|\cdot\|_{G_{\R}}
\doteqdot
\|\cdot\|_{G}
$.
Then
$\lr{G_{\R}}{\|\cdot\|_{G_{\R}}}$ 
is
a
Banach space
over $\R$
and will be called the 
$\R-$Banach space associated to 
$\lr{G}{\|\cdot\|_{G}}$.
\par\hspace{12pt}
Let
$F,G$ 
be two $\C-$Banach spaces
then
of course
$
B(F,G)
\subset
B(F_{\R},G_{\R})
$,
where the inclusion is to be intended only 
as
a set inclusion.
Let
$A\subseteq F$
then
if
$A$
is open
in
$F$
it is
open
also in
$F_{\R}$.
For a mapping 
$
f:A\subseteq F\to G
$,
we will denote with the symbol 
$
f^{\R}:A\subseteq F_{\R}\to G_{\R}
$
the same mapping but considered defined 
in the 
subset 
$A$
of the
$\R-$Banach space
associated to 
$F$
and at values
in the 
$\R-$Banach space
associated to 
$G$.
\end{definition}
\begin{remark}
\label{08051500}
Let $Y,Z$ be two
$\C-$Banach spaces,
then by considering that 
$B(Y,Z)\subset B(Y_{\R},Z_{\R})$,
we have that 
for
each Fr\'{e}chet differential function
$f:A\subseteq Y\to Z$
the same function
$f^{\R}:A\subseteq Y_{\R}\to Z_{\R}$
considered in the corresponding 
real Banach spaces,
is differentiable, 
in addition
$
f^{[1]}
=
(f^{\R})^{[1]}
$.
Therefore
if we get a real Banach space 
$X$,
we shall obtain 
the
same
statements 
of 
Corollary
\ref{08051458},
Remark
\ref{G08051458}
and
Remark \ref{14081324}
by replacing
$\A$
with
$\A_{\R}$.
In particular 
take
$X\doteqdot\R$,
and recall 
that 
for every differential 
map
$H:D\subseteq\R\to\A_{\R}$
we have
$H^{[1]}(t)(1)
=
\frac{d\,H}{d\,t}(t)$
for all $t\in D$,
where
$\frac{d\,H}{d\,t}:D\to\A_{\R}$
is the derivative of $H$.
Hence
if we denote
$
g(\lambda)
\doteqdot
\sum_{n=0}^{\infty}
\alpha_{n}
\lambda^{n}
$
and
assume 
$0<\w{r}<\frac{R}{3}$,
then
we obtain
by
\eqref{06071746}
that
for all
$t\in D\subseteq\R$ 
\begin{equation}
\label{04071034}
\frac{
d\,
g^{\R}\circ\mc{T}
}{dt}
(t)
=
\sum_{p=1}^{\infty}
\frac{1}{p!}
g^{(p)}(\mc{T}(t))
C(\mc{T}(t))^{p-1}
\left(
\frac{d\,\mc{T}}{dt}(t)
\right). 
\end{equation}
In addition
if
$
\sup_{t\in D}
\|
\frac{d\,\mc{T}}{dt}(t) 
\|_{\A}
<\infty
$,
then
the series in the 
\eqref{04071034}
is 
absolutely
uniformly convergent
on
$D$.
This formula
has been 
shown
for the first time
by Victor I. Burenkov
in
\cite{BurenkovDiff}.
\end{remark}
Notice that 
$C(\mc{T}(t))^{0}=\un$
and for all
$n\in\N-\{0\}$
$$
C(\mc{T}(t))^{n}
\left(
\frac{d\,\mc{T}}{dt}(t)
\right) 
=
\overbrace{
\biggl[
\cdots
\biggl[
\biggl[
}^{n}
\frac{d\,\mc{T}}{dt}(t),
\mc{T}(t)
\overbrace{
\biggr],
\mc{T}(t)
\biggr],
\cdots
\biggr]}^{n}.
$$
In particular if 
$\left[\frac{d\,\mc{T}}{dt}(t),\mc{T}(t)\right]=\ze$,
then
\begin{equation}
\label{16210903}
\frac{
d\,
g^{\R}
\circ
\mc{T}
}{dt}
(t)
=
g^{(1)}(\mc{T}(t))
\frac{d\,\mc{T}}{dt}(t).
\end{equation}
If
$
\left[
\left[
\frac{d\,\mc{T}}{dt}(t),
\mc{T}(t)
\right],
\mc{T}(t)
\right]
=\ze
$
then
\begin{equation}
\label{05180801}
\frac{
d\,
g^{\R}
\circ
\mc{T}
}{dt}
(t)
=
g^{(1)}(\mc{T}(t))
\frac{d\,\mc{T}}{dt}(t)
+
\frac{1}{2}
g^{(2)}(\mc{T}(t))
\left[
\frac{d\,\mc{T}}{dt}(t),
\mc{T}(t)
\right]
\end{equation}
and so on.
\begin{corollary}
\label{08051809}
Let $\A$ be a unitary Banach algebra, 
$\{\alpha_{n}\}_{n\in\N}\subset\K$
be
such that
the
radius
of
convergence
of
the series
$
g(\lambda)
\doteqdot
\sum_{n=0}^{\infty}
\alpha_{n}
\lambda^{n}
$
is
$R>0$.
Finally
let
$
W\in\A
$,
$0<r<R$,
$
D_{(r,W)}
\doteqdot
\left
]
-
\frac{r}{\|W\|},
\frac{r}{\|W\|}
\right
[
$
with the convention
$\frac{r}{0}\doteqdot\infty$
and
$$
\mc{T}(t)
=
t
W
$$
for all
$
t
\in
D_{(r,W)}
$.
Then
with the notations adopted 
in the statements of 
Lemma
\ref{05051540},
we have 
\begin{enumerate}
\item
$$
g^{\R}
\circ 
\mc{T}(t)
=
\sum_{n=0}^{\infty}
\alpha_{n}t^{n}W^{n}
$$
and the series
is
absolutely
uniformly convergent
for
$
t
\in
D_{(r,W)}
$.
\item
$
g^{\R}
\circ 
\mc{T}
$
is 
derivable,
the following map 
\begin{equation}
\label{08051520e}
\frac{
d
\,g^{\R}
\circ 
\mc{T}
}{dt}(t)
=
W
\sum_{n=1}^{\infty}
\alpha_{n}
n 
t^{n-1}
W^{n-1}
=
W
\frac{
d\,g
}{d\lambda}
\circ\mc{T}(t)
\end{equation}
for all
$
t\in
D_{(r,W)}
$
is the
derivative function 
of 
$g^{\R}\circ\mc{T}$,
is continuous 
and
the series in the 
\eqref{08051520e}
is absolutely
uniformly convergent.
\end{enumerate}
\end{corollary}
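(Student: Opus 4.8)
The plan is to realize Corollary \ref{08051809} as the specialization of Corollary \ref{08051458} (in its real-scalar form, cf. Definition \ref{29081058} and Remark \ref{08051500}) to $X \doteqdot \R$, $D \doteqdot D_{(s,W)}$ and the linear map $T(t) = tW$, and then to identify the Frechet differential on an interval of $\R$ with the ordinary derivative. First I would record that $T$ is Frechet differentiable with $T^{[1]}(t)(h) = hW$ for all $t \in D_{(s,W)}$ and $h \in \R$; thus $T^{[1]}$ is constant in $t$, hence continuous, with $\|T^{[1]}(t)\|_{B(\R,\A)} = \|W\|_{\A}$ for every $t$. Moreover $\|T(t)\|_{\A} = |t|\,\|W\|_{\A} < s$ on $D_{(s,W)}$, so $T(D_{(s,W)}) \subseteq B_{s}(\ze)$ and $\w{s} = \sup_{t \in D_{(s,W)}}\|T(t)\|_{\A} = s$; since $W \neq \ze$ and $s>0$ this gives $0 < \w{s} = s < R$, so the hypotheses of Corollary \ref{08051458} hold.

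For statement $(1)$ I would apply statement \eqref{A08051458} of Corollary \ref{08051458} (over $\A_{\R}$), which yields $g^{\R}\circ T = \sum_{n=0}^{\infty}\alpha_{n}T^{n}$ with $T^{n}(t) = T(t)^{n} = t^{n}W^{n}$. Absolute uniform convergence on $D_{(s,W)}$ then follows from $\sup_{t \in D_{(s,W)}}\|\alpha_{n}t^{n}W^{n}\|_{\A} \leq |\alpha_{n}|\,(s/\|W\|_{\A})^{n}\,\|W\|_{\A}^{n} = |\alpha_{n}|s^{n}$ together with $\sum_{n}|\alpha_{n}|s^{n} < \infty$, which holds because $s < R$.

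For statement $(2)$ I would invoke statement \eqref{B08051458} of Corollary \ref{08051458}, giving Frechet differentiability of $g^{\R}\circ T$ and, by \eqref{08051520}, $[g^{\R}\circ T]^{[1]}(t) = \sum_{n=0}^{\infty}\alpha_{n}\,u_{n}^{[1]}(T(t))\circ T^{[1]}(t)$, the series converging in $B(\R,\A)$. Since $D_{(s,W)} \subseteq \R$, the derivative is recovered as $\frac{d}{dt}(g^{\R}\circ T)(t) = [g^{\R}\circ T]^{[1]}(t)(1)$, and because evaluation at $1 \in \R$ is continuous on $B(\R,\A)$ and the series converges there, I may pass the evaluation inside the sum to get $\frac{d}{dt}(g^{\R}\circ T)(t) = \sum_{n=0}^{\infty}\alpha_{n}\,u_{n}^{[1]}(tW)(W)$. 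The decisive computation uses the first equality of \eqref{03071049} in Lemma \ref{05051540a}, $u_{n}^{[1]}(S) = \sum_{p=1}^{n}\RM(S)^{n-p}\mathcal{L}(S)^{p-1}$, evaluated at $S = tW$ and $W$: this gives $u_{n}^{[1]}(tW)(W) = \sum_{p=1}^{n}(tW)^{n-p}\,W\,(tW)^{p-1} = \sum_{p=1}^{n}t^{n-1}W^{n} = n\,t^{n-1}W^{n}$. Factoring $W$ on the left yields $\frac{d}{dt}(g^{\R}\circ T)(t) = \sum_{n=1}^{\infty}\alpha_{n}n\,t^{n-1}W^{n} = W\sum_{n=1}^{\infty}\alpha_{n}n\,t^{n-1}W^{n-1} = W\,\frac{dg}{d\lambda}\circ T(t)$, which is exactly \eqref{08051520e}.

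Continuity of the derivative function then follows from statement \eqref{1B08051458} of Corollary \ref{08051458}, since $T^{[1]}$ is constant and hence continuous, while the absolute uniform convergence of the series in \eqref{08051520e} follows from statement \eqref{2B08051458}, since $\sup_{t \in D_{(s,W)}}\|T^{[1]}(t)\|_{B(\R,\A)} = \|W\|_{\A} < \infty$ (one may also verify it directly via $\sup_{t \in D_{(s,W)}}\|\alpha_{n}n\,t^{n-1}W^{n}\|_{\A} \leq |\alpha_{n}|n\,s^{n-1}\|W\|_{\A}$ and convergence of the differentiated series $\sum_{n}|\alpha_{n}|n\,s^{n-1}$ for $s < R$). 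I expect no genuine obstacle, as all the analytic content has already been established in Corollary \ref{08051458} and Lemma \ref{05051540a}; the only steps requiring care are the real-scalar reduction (handled by Remark \ref{08051500}) and the justification for evaluating the $B(\R,\A)$-convergent series at $1$, which is precisely the continuity of that evaluation.
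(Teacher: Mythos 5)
Your proposal is correct and takes essentially the paper's route: the paper likewise handles statement $(1)$ as immediate, and for statement $(2)$ reduces to the real-scalar setting via Remark \ref{08051500} and kills the commutator contribution --- it cites formula \eqref{07072013II} of Remark \ref{14081324}, where $[T(t),T^{[1]}(t)(h)]=[tW,hW]=\ze$ makes the second series vanish, whereas you achieve the same collapse one level lower by computing $u_{n}^{[1]}(tW)(W)=n\,t^{n-1}W^{n}$ from the first equality of \eqref{03071049} inside formula \eqref{08051520} of Corollary \ref{08051458}. Your extra verifications (that $\w{s}=s<R$, the continuity of evaluation at $1\in\R$ justifying term-by-term evaluation, and the direct estimate for absolute uniform convergence) are sound and merely make explicit what the paper's two-line proof leaves implicit.
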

\proof
Statement $(1)$ is 
trivial.
The map $\mc{T}$ is derivable 
with constant derivative equal to $W\in\A$,
hence
we have statement $(2)$
by 
Remark
\ref{08051500}
and
\eqref{07072013II}.
\endproof
\section{
Application to the 
analytic functional calculus in 
a
$\C-$Banach space
}
\label{10052127}
In 
this section
$G$ is
a complex
Banach space
and
$Open(\C)$
is the set of all 
open subsets of $\C$.
We denote
by
$\sigma(T)$
the spectrum
of
$T$
for all
$T\in B(G)$,
and for all
$U\in Open(\C)$
such that
$\sigma(T)\subset U$
and
$g:U\to\C$
analytic,
by
$g(T)$
the operator belonging to $B(G)$
as defined 
in the 
analytic functional calculus 
framework 
given in 
Definition 
$7.3.9.$ of the \cite{ds},
that is
$$
g(T)
\doteqdot
\frac{1}{2\pi i}
\int_{B}g(\lambda)R(\lambda;T)d\,\lambda.
$$
Here 
$R(\lambda;T)\doteqdot(\lambda\un-T)^{-1}$
is the resolvent of $T$,
while
$B\subset U$ 
is the boundary of an open set containing $\sigma(T)$
and consisting of a 
finite number of rectifiable Jordan curves.
If
$U$
is an open 
neighborhood of
$0$
and 
$g(\lambda)
=
\sum_{n=0}^{\infty}
\alpha_{n}\lambda^{n}$
for all 
$\lambda\in U$,
then
by
Theorem
$7.3.10.$ of the \cite{ds}
$
g(T)
=
\sum_{n=0}^{\infty}
\alpha_{n}T^{n}
$
converging
in $B(G)$.
Therefore
for this case
we can apply
all
the 
results 
in 
Section
\ref{15061005}.
\begin{corollary}
[
Fr\'{e}chet 
differential
of an 
operator valued
analytic function
defined on
an open set 
of
a $\R-$Banach Space
]
\label{09051552}
Let 
$U_{0}$
be
an open 
neighborhood of
$0\in\C$,
$g:U_{0}\to\C$
an analytic function
such that
$g(\lambda)
=
\sum_{n=0}^{\infty}
\alpha_{n}\lambda^{n}$,
for all $\lambda\in U_{0}$.
Let
$R>0$
be
the 
radius
of
convergence
of
the series 
$
\sum_{n=0}^{\infty}
\alpha_{n}
\lambda^{n}
$.
Finally
let 
$X$ 
be a Banach space over $\R$,
$D\subseteq X$ an open set of $X$
and $\mc{T}:D\to B(G)_{\R}$ 
a Fr\'{e}chet differentiable mapping
such that there exists
$r\in\R^{+}\mid 0<r<R$ 
such that
\begin{enumerate}
\item
$\mc{T}(D)\subseteq B_{r}(\ze)$
\item
$\sigma(\mc{T}(x))\subseteq U_{0}$,
for all $x\in D$.
\end{enumerate}
Then
\begin{enumerate}
\item
$$
g^{\R}\circ\mc{T}
=
\sum_{n=0}^{\infty}
\alpha_{n}\mc{T}^{n}.
$$
Here
the series
absolutely
uniformly converges
on
$D$.
\label{A09051552}
\item
The
statements
of
Corollary
\ref{08051458},
Remark
\ref{G08051458}
and
Remark
\ref{14081324}
hold with
$\A$
replaced by
$B(G)_{\R}$,
while
Remark
\ref{08051500}
holds with
$\A$
replaced by
$B(G)$.
\label{B09051552}
\end{enumerate}
\end{corollary}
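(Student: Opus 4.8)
The plan is to reduce everything to the already-established machinery of Section \ref{15061005} by identifying the correct unital Banach algebra and then invoking the power-series functional calculus. First I would take $\A \doteqdot B(G)$, which is a unital complex Banach algebra with unit $\un = \mathrm{id}_G$ and submultiplicative operator norm, so all hypotheses of Lemma \ref{05051540}, Theorem \ref{05051540c}, and Corollary \ref{08051458} are met. The function $g(\lambda) = \sum_{n=0}^{\infty}\alpha_n\lambda^n$ has radius of convergence $R>0$ by hypothesis. The crucial bridge is the observation recorded just before the statement: by Theorem $7.3.10$ of \cite{ds}, whenever $\sigma(S)$ lies in the neighborhood of convergence the analytic-functional-calculus value $g(S)$ coincides with the norm-convergent series $\sum_{n=0}^{\infty}\alpha_n S^n$. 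Hypothesis $(2)$, namely $\sigma(T(x))\subseteq U_0$ for every $x\in D$, together with hypothesis $(1)$, $T(D)\subseteq B_s(\ze)$ with $0<s<R$, guarantees $\|T(x)\|_{B(G)} < s < R$, so each $T(x)$ lies in the domain of Definition \ref{03071132} and the two notions of $g(T(x))$ agree.

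Next I would establish statement \eqref{A09051552}. Since $T(D)\subseteq B_s(\ze)$, we have $\w{s}=\sup_{x\in D}\|T(x)\|_{\A}\leq s < R$; applying statement \eqref{A08051458} of Corollary \ref{08051458} (with $\A = B(G)$ in the convergent case $\w{s}<R$) yields that $\sum_{n=0}^{\infty}\alpha_n T^n$ converges uniformly on $D$ and equals the composite built from the power series. By the coincidence from Theorem $7.3.10$ of \cite{ds} applied pointwise, this composite is exactly $g^{\R}\circ T$, where the superscript $\R$ records that $g$ is being composed with a map into the associated real Banach space $B(G)_{\R}$ in the sense of Definition \ref{29081058}. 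Here I would use Remark \ref{08051500}: Frechet differentiability and the differential are unchanged when passing from a $\C$-Banach space to its associated $\R$-Banach space, so working over $\R$ costs nothing and the identification $g^{\R}\circ T = \sum_{n=0}^{\infty}\alpha_n T^n$ holds with absolutely uniform convergence on $D$.

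Statement \eqref{B09051552} is then a transfer statement. Having placed ourselves in the setting of Corollary \ref{08051458} with $\A = B(G)$ and its associated real algebra $\A_{\R} = B(G)_{\R}$, I would simply invoke Remark \ref{08051500}, which asserts verbatim that the conclusions of Corollary \ref{08051458}, Remark \ref{G08051458}, and Remark \ref{14081324} remain valid with $\A$ replaced by $\A_{\R}$, and that Remark \ref{08051500} itself holds with $\A$ replaced by the underlying $\C$-algebra $B(G)$. Since $X$ is a real Banach space and $T:D\to B(G)_{\R}$ is Frechet differentiable with $T(D)\subseteq B_s(\ze)$, every hypothesis needed to apply those three results is in force, so each conclusion carries over unchanged.

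The main obstacle, and the only nontrivial point, is verifying that the two definitions of $g(T(x))$ truly coincide so that the abstract Banach-algebra results of Section \ref{15061005} actually compute the analytic-functional-calculus object of interest. This is precisely why both hypotheses are needed: hypothesis $(1)$ forces $\|T(x)\| < R$ so that the algebraic power-series definition of Definition \ref{03071132} applies, while hypothesis $(2)$ places $\sigma(T(x))$ inside $U_0$ so that the contour-integral definition applies and Theorem $7.3.10$ of \cite{ds} equates the two. Once this coincidence is secured, everything else is an immediate specialization of already-proved results, and no new estimate is required.
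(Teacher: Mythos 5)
Your proposal is correct and takes essentially the same route as the paper's own proof: both use Theorem $7.3.10$ of \cite{ds} to identify the analytic functional calculus value $g(T(x))$ with the norm-convergent power series (well-definedness coming from $\sigma(T(x))\subseteq U_{0}$, convergence from $T(D)\subseteq B_{s}(\ze)$ with $0<s<R$), and then obtain both statements by specializing Corollary \ref{08051458}, Remark \ref{G08051458} and Remark \ref{14081324} through the real/complex transfer of Remark \ref{08051500} with $\A=B(G)$ and $\A_{\R}=B(G)_{\R}$. Your extra paragraph spelling out why each of the two hypotheses is needed only makes explicit what the paper leaves implicit.
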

\proof
The map
$g^{\R}\circ\mc{T}$
is well defined 
by 
the condition
$\sigma(\mc{T}(x))\subseteq U_{0}$
for all 
$x\in D$,
while
the power series expansion 
follows
by
Theorem
$7.3.10.$ of the \cite{ds}.
Therefore
statement 
\ref{A09051552}
follows
by
the hypothesis 
$\mc{T}(D)\subseteq B_{r}(\ze)$,
with
$0<r<R$
and
Remark
\ref{08051500}.
Statement  
\ref{B09051552}
is by
Corollary
\ref{08051458}
and
Remark
\ref{08051500}.
\endproof
\begin{remark}
\label{05071034}
If we assume that
$G$
is a complex Hilbert space
and
$\mc{T}(x)$
is a normal operator
for all 
$x\in D$,
then
the condition
$\mc{T}(D)\subseteq B_{r}(\ze)$
is equivalent
to the following one
$\sigma(\mc{T}(x))\subseteq B_{r}(\ze)$
for all 
$x\in D$.
\end{remark}
Although
the following is a well-known
result, for the sake 
of completeness
we shall give a proof 
by using 
Corollary
\ref{08051809}.
\begin{corollary}
\label{09051748}
Let $\{\alpha_{n}\}_{n\in\N}\subset\C$
be 
such that the radius of convergence of
the series 
$
g(\lambda)
=
\sum_{n=0}^{\infty}\alpha_{n}\lambda^{n}
$
is $R>0$.
Moreover
let
$W\in B(G)$,
$0<r<R$,
and
$
D_{(r,W)}
\doteqdot
\left
]
-
\frac{r}{\|W\|},
\frac{r}{\|W\|}
\right
[
$
with the convention
$\frac{r}{0}\doteqdot\infty$.
Then the operator
$g(tW)$ is well defined
for all
$t\in
D_{(r,W)}
$
and
$$
g(tW)
=
\sum_{n=0}^{\infty}
\alpha_{n}t^{n}W^{n}.
$$
Here the series converges absolutely uniformly
on 
$
D_{(r,W)}
$.
Moreover the map
$
D_{(r,W)}
\ni 
t
\mapsto
\frac{dg}{d\lambda}(tW)
$
is Lebesgue integrable in $B(G)$ 
in the sense
defined in
\cite{IntBourb},
Definition
$2$
Ch. $IV$, \S $3$, $n^{\circ} 4$,
and
for all
$
u_{1},u_{2}\in
D_{(r,W)}
$
$$
W
\int_{u_{1}}^{u_{2}}
\frac{dg}{d\lambda}(tW)
\,
dt
=
g(u_{2}W)
-
g(u_{1}W).
$$
Here
$
\int_{u_{1}}^{u_{2}}
\frac{dg}{d\lambda}(tW)
\,
dt
$
is the
Lebesgue integral
of the map
$
D_{(r,W)}
\ni 
t
\mapsto
\frac{dg}{d\lambda}(tW)
$
as defined
in
Definition
$1$
Ch. $IV$, \S $4$, $n^{\circ} 1$
of
\cite{IntBourb}.
\end{corollary}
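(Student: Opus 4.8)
The plan is to derive everything from Corollary~\ref{08051809}, applied to the unital $\C$-Banach algebra $\A\doteqdot B(G)$ (so $\K\doteqdot\C$), after identifying the analytic functional calculus $g(tW)$ with the Banach-algebra power series $\sum_{n=0}^{\infty}\alpha_{n}t^{n}W^{n}$. First I would check that $g(tW)$ is well defined for every $t\in D_{(s,W)}$: since $W\in B(G)$ and $|t|<s/\|W\|$, we have $\|tW\|_{B(G)}<s<R$, so the spectral radius obeys $r(tW)\leq\|tW\|<R$ and therefore $\sigma(tW)\subseteq B_{R}(\ze)\subset\C$, the open disk on which $g$ is analytic. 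Thus $g(tW)$ is defined in the functional calculus framework of Definition~$7.3.9.$ of \cite{ds}. Because $g(\lambda)=\sum_{n=0}^{\infty}\alpha_{n}\lambda^{n}$ on the neighbourhood $B_{R}(\ze)$ of $0$, Theorem~$7.3.10.$ of \cite{ds} gives $g(tW)=\sum_{n=0}^{\infty}\alpha_{n}(tW)^{n}=\sum_{n=0}^{\infty}\alpha_{n}t^{n}W^{n}$, converging in $B(G)$; and statement~$(1)$ of Corollary~\ref{08051809}, with $\A=B(G)$ and $T(t)=tW$, supplies the absolute uniform convergence of this series on $D_{(s,W)}$. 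This settles the first part.

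For the integral, I would invoke statement~$(2)$ of Corollary~\ref{08051809}: the map $g^{\R}\circ T$ is derivable on $D_{(s,W)}$ with continuous derivative
$$\frac{d\,g^{\R}\circ T}{dt}(t)=W\,\frac{dg}{d\lambda}(tW),\quad\forall t\in D_{(s,W)}.$$
Applying Corollary~\ref{08051809} to the derivative series $\frac{dg}{d\lambda}$ (which has the same radius of convergence $R$), the map $t\mapsto\frac{dg}{d\lambda}(tW)$ is the absolutely uniformly convergent sum on $D_{(s,W)}$ of the continuous maps $t\mapsto n\alpha_{n}t^{n-1}W^{n-1}$, hence is itself continuous and uniformly bounded, with $\sup_{t\in D_{(s,W)}}\|\frac{dg}{d\lambda}(tW)\|_{B(G)}\leq\sum_{n=1}^{\infty}n|\alpha_{n}|s^{n-1}<\infty$, the bounding series converging because $s<R$. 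A continuous, uniformly bounded, $B(G)$-valued map on the bounded interval $D_{(s,W)}$ is Lebesgue integrable in the sense of \cite{IntBourb}, which is precisely the asserted integrability.

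Finally I would apply the fundamental theorem of calculus for Banach-space-valued $C^{1}$ maps to $g^{\R}\circ T$ on $[u_{1},u_{2}]\subset D_{(s,W)}$, obtaining
$$g(u_{2}W)-g(u_{1}W)=\oint_{u_{1}}^{u_{2}}\frac{d\,g^{\R}\circ T}{dt}(t)\,dt=\oint_{u_{1}}^{u_{2}}W\,\frac{dg}{d\lambda}(tW)\,dt.$$
Since left multiplication by the fixed operator $W$ is a continuous linear endomorphism of $B(G)$, it commutes with the Lebesgue integral of \cite{IntBourb}, so the last expression equals $W\oint_{u_{1}}^{u_{2}}\frac{dg}{d\lambda}(tW)\,dt$, which is the claimed identity.

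The main obstacle I anticipate is matching the soft analysis above with the precise Bourbaki integration machinery of \cite{IntBourb}: one must verify that continuity together with uniform boundedness on the bounded open interval $D_{(s,W)}$ indeed yields integrability in the exact sense of Definition~$2$, Ch.~$IV$, \S$3$, $n^{\circ}4$, and that both the fundamental theorem of calculus for a continuously differentiable primitive and the commutation of a continuous linear map with the integral are available verbatim in that framework. The rest is a routine transcription of Corollary~\ref{08051809} to the special algebra $B(G)$ together with the functional-calculus identification.
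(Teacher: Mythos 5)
Your proposal is correct and follows essentially the same route as the paper: identification of $g(tW)$ with the power series via Theorem $7.3.10.$ of \cite{ds}, Corollary \ref{08051809} for the derivative formula \eqref{08051520e} and the convergence estimates, integrability from continuity plus the bound $\sum_{n\geq 1}n|\alpha_{n}|s^{n-1}<\infty$ (the paper cites Theorem $5$, $IV.71$ of \cite{IntBourb}), extraction of $W$ from the integral via $\RM(W)\in B(B(G))$ and Theorem $1$, $IV.35$ of \cite{IntBourb}, and the fundamental theorem of calculus for the $C^{1}$ map $g^{\R}\circ T$. The one verification you flag as an obstacle is exactly what the paper supplies: it obtains the fundamental theorem of calculus from Proposition $3$, $n^{\circ}3$, $\S 1$, Ch. $II$ of \cite{FRV} and then reconciles that integral with the Lebesgue integral of \cite{IntBourb} via Ch. $III$, \S $3$, $n^{\circ}3$ and the example in Ch. $IV$, \S $4$, $n^{\circ}4$ of \cite{IntBourb}.
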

\proof
By \eqref{10501634}
\begin{equation}
\label{09052028}
\RM(W)\in B(B(G)).
\end{equation}
Set
$
\mc{T}(t)\doteqdot tW
$
for all
$
t\in
D_{(r,W)}
$.
Then
$
\frac{d\,g}{d\,\lambda}
\circ
\mc{T}(t)
=
\sum_{n=1}^{\infty}
\alpha_{n}
n
t^{n-1}
W^{n-1}
$
and 
the
map
$
D_{(r,W)}
\ni
t
\mapsto
\frac{
d\,g
}{d\lambda}
\circ\mc{T}(t)
$
is continuous
in
$B(G)$,
as a corollary
of 
\eqref{08051520e},
by replacing the map
$g$ 
with
$
\frac{d g}{d\lambda}
$,
hence
it
is 
Lebesgue-measurable
in
$B(G)$.
Finally
let
$
u_{1},u_{2}
\in
D_{(r,W)}
$
\begin{alignat*}{1}
\int_{[u_{1},u_{2}]}^{*}
\|
\frac{d\,g}{d\,\lambda}
(tW)
\|
d
\,
t
&
=
\int_{[u_{1},u_{2}]}^{*}
\|
\sum_{n=1}^{\infty}
\alpha_{n}
n 
t^{n-1}
W^{n-1}
\|
d
\,
t\\
&
\leq
\int_{[u_{1},u_{2}]}^{*}
\sum_{n=1}^{\infty}
|\alpha_{n}|n r^{n-1} d\,t\\
&
=
|u_{2}-u_{1}|
\sum_{n=1}^{\infty}
|\alpha_{n}|
n 
r^{n-1}
<
\infty.
\end{alignat*}
Here
$
\int_{[u_{1},u_{2}]}^{*}
$
is the upper integral
of the Lebesgue measure
on 
$[u_{1},u_{2}]$.
By
this 
boundedness
and
by its 
Lebesgue-
measurability
we conclude 
by
Theorem $5$,
$IV.71$
of
\cite{IntBourb} 
that
$
[u_{1},u_{2}]
\ni
t
\mapsto
\frac{
d\,g
}{d\lambda}
\circ\mc{T}(t)
\in 
B(G)
$
is Lebesgue-integrable 
in
$B(G)$,
so in particular
by Definition
$1$,
$IV.33$
of
\cite{IntBourb}
\begin{equation}
\label{09052027}
\exists
\int_{u_{1}}^{u_{2}}
\frac{d\,g}{d\,\lambda}
\circ\mc{T}(t)d\,t
\in B(G).
\end{equation}
Therefore 
by
\eqref{09052028},
\eqref{09052027},
Theorem $1$,
$IV.35$
of
\cite{IntBourb} 
and
\eqref{08051520e}
\begin{equation}
\label{21061013}
W
\int_{u_{1}}^{u_{2}}
\frac{d\,g}{d\,\lambda}
\circ\mc{T}(t)
d\,t
=
\int_{u_{1}}^{u_{2}}
W
\frac{d\,g}{d\,\lambda}
\circ\mc{T}(t)
d\,t
=
\int_{u_{1}}^{u_{2}}
\frac{d\,g^{\R}(\mc{T}(t))}{dt}
d\,t.
\end{equation}
\par
Furthermore
by 
the continuity
of
the map
$
D_{(r,W)}
\ni
t
\mapsto
\frac{
d\,g
}{d\lambda}
\circ\mc{T}(t)
$
in
$B(G)$
and
by
\eqref{08051520e},
$
D_{(r,W)}
\ni
t
\mapsto
\frac{d\,g^{\R}(\mc{T}(t))}{dt}
$
is
continuous
in
$B(G)$
and
it
is
the derivative of
the map
$
D_{(r,W)}
\ni
t
\mapsto
\,g^{\R}
\circ 
\mc{T}(t)
$.
Therefore
it
is 
Lebesgue integrable
in
$B(G)$,
where 
the integral
has to be understood
as defined in
Ch
$II$
of
\cite{FRV},
see 
Proposition
$3$,
$n^{\circ}3$,
$\S 1$,
Ch
$II$
of
\cite{FRV}.
Finally
the 
Lebesgue integral
for functions with values
in a Banach space
as defined in 
Ch
$II$
of
\cite{FRV},
turns
to be
the 
integral
with respect to the Lebesgue
measure
as defined in
Ch. $IV$, \S $4$, $n^{\circ} 1$ 
of
\cite{IntBourb}
(see 
Ch $III$, \S $3$, $n^{\circ} 3$
and 
example in 
Ch $IV$, \S $4$, $n^{\circ} 4$ 
of 
\cite{IntBourb}).
Thus
the
statement 
follows
by
\eqref{21061013}.
\endproof
Finally we want to remark
that
one of the main aims of 
our work \cite{sil}
is 
proving this formula 
for a certain class 
of unbounded operators in a Banach space
and by considering the integral
in weaker topologies than that 
induced by the
norm 
in $B(G)$.


\bibliographystyle{abbrv}


\end{document}